\documentclass[10pt]{article}

\usepackage{amsmath}
\usepackage{amsfonts}
\usepackage{amsthm}
\usepackage[english]{babel}

\setlength{\topmargin}{0cm}
\setlength{\textheight}{22.5cm}
\setlength{\oddsidemargin}{0cm}
\setlength{\evensidemargin}{0cm}
\setlength{\textwidth}{16cm}

\parindent0pt
\thispagestyle{empty}


\newtheorem{theorem}{Theorem}
\newtheorem{lemma}{Lemma}
\newtheorem{definition}{Definition}
\newtheorem{corollary}{Corollary}

\newtheorem{remark}{Remark}

\newcommand{\mR}{\mathbb{R}}

\newcommand{\mN}{\mathbb{N}}
\newcommand{\mE}{\mathbb{E}}

\newcommand{\mS}{\mathbb{S}}

\newcommand{\cD}{\mathcal{D}}
\newcommand{\cM}{\mathcal{M}}
\newcommand{\cH}{\mathcal{H}}

\newcommand{\cF}{\mathcal{F}}
\newcommand{\cP}{\mathcal{P}}
\newcommand{\cC}{\mathcal{C}}
\newcommand{\cR}{\mathcal{R}}
\newcommand{\cK}{\mathcal{K}}
\newcommand{\cS}{\mathcal{S}}

\newcommand{\ux}{\underline{x}}
\newcommand{\uxb}{\underline{x} \grave{}}

\newcommand{\uyb}{\underline{y} \grave{}}
\newcommand{\uy}{\underline{y}}
\newcommand{\uub}{\underline{u} \grave{}}

\newcommand{\uvb}{\underline{v} \grave{}}

\newcommand{\pj}{\partial_{x_j}}
\newcommand{\pjb}{\partial_{{x \grave{}}_{j}}}

\newcommand{\px}{\partial_x}
\newcommand{\py}{\partial_y}
\newcommand{\upx}{\partial_{\underline{x}}}
\newcommand{\upxb}{\partial_{\underline{{x \grave{}}} }}
\newcommand{\upy}{\partial_{\underline{y}}}
\newcommand{\upyb}{\partial_{\underline{{y \grave{}}} }}

\begin{document}
\title{Fourier transform and related integral transforms in superspace}

\author{H.\ De Bie\thanks{Ph.D. Fellow of the Research Foundation - Flanders (FWO), E-mail: {\tt Hendrik.DeBie@UGent.be}}}

\date{\small{Clifford Research Group -- Department of Mathematical Analysis}\\
\small{Faculty of Engineering -- Ghent University\\ Galglaan 2, 9000 Gent,
Belgium}}

\maketitle

\begin{abstract}
In this paper extensions of the classical Fourier, fractional Fourier and Radon transforms to superspace are studied. Previously, a Fourier transform in superspace was already studied, but with a different kernel. In this work, the fermionic part of the Fourier kernel has a natural symplectic structure, derived using a Clifford analysis approach.
Several basic properties of these three transforms are studied. Using suitable generalizations of the Hermite polynomials to superspace (see [J. Phys. A: Math. Theor. 40 (2007) 10441-10456]) an eigenfunction basis for the Fourier transform is constructed.
\end{abstract}

\textbf{MSC 2000 :}   30G35, 58C50, 42B10, 44A12 \\
\noindent
\textbf{Keywords :}   Clifford analysis, superspace, Fourier transform, fractional Fourier transform, Radon transform

\section{Introduction}

In a previous set of papers (see \cite{DBS1,DBS3,DBS4,DBS5,DBS6}) we have developed a new approach to the study of superspace. This approach is not based on algebraic geometry as in \cite{MR0580292,MR732126,MR565567}, nor on differential geometry as in \cite{MR574696,MR778559,MR1099318}, but instead on a generalization of Clifford analysis, i.e. a hypercomplex function theory of functions taking values in Clifford algebras (see \cite{MR697564,MR1169463}). In short, this means that by introducing suitable orthogonal and symplectic Clifford algebra generators, we were able to define a.o. a Dirac operator which squares to a super Laplace operator and several other important types of operators in superspace, thus constructing a representation of the Lie superalgebra $\mathfrak{osp}(1|2)$.

The advantage of our approach is that it has allowed a.o. for a strong motivation for the Berezin integral (see \cite{DBS5} and \cite{DBE1}) using notions from classical harmonic analysis. We were also able to construct an elegant generalization of Cauchy's integral formula to superspace (see \cite{DBS7}), thus providing more insight in the nature of fermionic integration.

The aim of the present paper is to use this newly developed framework to study generalizations of the Fourier, the fractional Fourier and the Radon transform to superspace. This is of course not a new idea. Several other authors have already developed a Fourier calculus on superspace. Without claiming completeness, we refer the reader to e.g. \cite{MR1147140,MR778559,MR1192484}.
Apart from the fact that these authors work in different versions of superanalysis, the main difference is that we use another kernel to define the Fourier transform. Without going into further detail now, we will define the fermionic (i.e. with respect to the anti-commuting variables) Fourier transform as:
\[
\cF_{0 | 2n} (f(x))(y) = (2 \pi)^{n} \int_{B,x} e^{i \langle x, y \rangle} \; f(x)
\]
where the integration is the Berezin integral with respect to the $x$ variables and with
\[
\langle x, y \rangle = \langle x, y \rangle_s =  \sum_{j=1}^{n} ({x \grave{}}_{2j-1}{y \grave{}}_{2j} - {x \grave{}}_{2j} {y \grave{}}_{2j-1}).
\]
The other authors, cited above, use the same formula, but with the following kernel
\[
\langle x, y \rangle = \langle x, y \rangle_o =  \sum_{j=1}^{2n} {x \grave{}}_{j}{y \grave{}}_{j}.
\]

The use of this new kernel is quite crucial for the resulting calculus. First of all, we have that $\langle x, y \rangle_s = \langle y, x \rangle_s $  whereas $\langle x, y \rangle_o = -\langle y, x \rangle_o$. Secondly, $\langle x, y \rangle_s$ is invariant under symplectic changes of basis, while $\langle x, y \rangle_o$ is invariant under the orthogonal group.

A drawback of our approach is that we can only consider even numbers of anti-commuting variables, due to the symplectic structure. However, this also has some advantages, in that it makes the Berezin integral an even operator, independently of the dimension. Moreover, in this case the fermionic delta distribution satisfies $\delta(x-y) = \delta(y-x)$ (as classically) which is not the case when considering odd numbers of anti-commuting variables.

Another advantage of our approach is that this Fourier transform behaves nicely with respect to the fermionic Laplace operator (which has the corresponding symplectic structure of the Fourier kernel). This allows us to construct an eigenbasis of the Fourier transform using the Clifford-Hermite functions, introduced in \cite{DBS3}.
As a consequence, we obtain an operator exponential expression of the Fourier transform, which enables us to define a fractional Fourier transform (see e.g. \cite{OZA}) in superspace and to study some of its properties.

We are also able to define a Radon transform in superspace by means of the central-slice theorem (see \cite{MR1723736,MR709591}) which connects the classical Radon transform with two consecutive Fourier transformations. Again we will show that this transform behaves nicely with respect to the Clifford-Hermite functions.

In a subsequent paper (see \cite{DBS8}), we use the here developed Fourier transform to study a Schr\"odinger operator with delta (or Dirac) potential in superspace. Our method yields the wave function and the energy as function of the so-called super-dimension $M$, for all numerical values $M \leq 1$.

The paper is organized as follows. In section \ref{preliminaries} we give a short introduction to Clifford analysis in superspace, focusing on the notions needed in the sequel. In section \ref{BosFermFouriertrafo} we define the fermionic Fourier transform using the symplectic kernel and we study its basic properties. Then in section \ref{gentransform} we define the general Fourier transform, we study its eigenfunctions and we establish its operator exponential form. 
In section \ref{FundSolLaplaceOperator} we apply the general Fourier transform to determine the fundamental solution of the super Laplace operator. 
The operator exponential form of the Fourier transform is used in section \ref{FracFourierTrafo} to define the fractional Fourier transform.
Finally, the previous results on the Fourier transform are used in section \ref{RadonTransform} to define the Radon transform in superspace.

\section{Preliminaries}
\label{preliminaries}

The basic algebra of interest in the study of Clifford analysis in superspace (see \cite{DBS1,DBS4}) is the real algebra $\cP = \mbox{Alg}(x_i, e_i; {x \grave{}}_j,{e \grave{}}_j)$, $i=1,\ldots,m$, $j=1,\ldots,2n$
generated by

\begin{itemize}
\item $m$ commuting variables $x_i$ and $m$ orthogonal Clifford generators $e_i$
\item $2n$ anti-commuting variables ${x \grave{}}_i$ and $2n$ symplectic Clifford generators ${e \grave{}}_i$
\end{itemize}
subject to the multiplication relations
\[ \left \{
\begin{array}{l} 
x_i x_j =  x_j x_i\\
{x \grave{}}_i {x \grave{}}_j =  - {x \grave{}}_j {x \grave{}}_i\\
x_i {x \grave{}}_j =  {x \grave{}}_j x_i\\
\end{array} \right .
\quad \mbox{and} \quad
\left \{ \begin{array}{l}
e_j e_k + e_k e_j = -2 \delta_{jk}\\
{e \grave{}}_{2j} {e \grave{}}_{2k} -{e \grave{}}_{2k} {e \grave{}}_{2j}=0\\
{e \grave{}}_{2j-1} {e \grave{}}_{2k-1} -{e \grave{}}_{2k-1} {e \grave{}}_{2j-1}=0\\
{e \grave{}}_{2j-1} {e \grave{}}_{2k} -{e \grave{}}_{2k} {e \grave{}}_{2j-1}=\delta_{jk}\\
e_j {e \grave{}}_{k} +{e \grave{}}_{k} e_j = 0\\
\end{array} \right .
\]
and where moreover all generators $e_i$, ${e \grave{}}_j$ commute with all variables $x_i$, ${x \grave{}}_j$.

\noindent
Denoting by $\Lambda_{2n}$ the Grassmann algebra generated by the anti-commuting variables ${x \grave{}}_j$ and by $\cC$ the algebra generated by all the Clifford numbers $e_i, {e \grave{}}_j$, we clearly have that
\[
\cP = \mR[x_1,\ldots,x_m]\otimes \Lambda_{2n} \otimes \cC.
\]

In the case where $n = 0$ we have that $\cC \cong \mR_{0,m}$, the standard orthogonal Clifford algebra with signature $(-1,\ldots,-1)$. Similarly, the algebra generated by the ${e \grave{}}_j$ is isomorphic with the Weyl algebra over a vectorspace of dimension $2n$ equipped with the canonical symplectic form.

The most important element of the algebra $\cP$ is the vector variable $x = \ux+\uxb$ with
\[
\begin{array}{lll}
\ux &=& \sum_{i=1}^m x_i e_i\\
&& \vspace{-2mm}\\
\uxb &=& \sum_{j=1}^{2n} {x \grave{}}_{j} {e \grave{}}_{j}.
\end{array}
\]

One easily calculates that
\[
x^2 = \uxb^2 +\ux^2 = \sum_{j=1}^n {x\grave{}}_{2j-1} {x\grave{}}_{2j}  -  \sum_{j=1}^m x_j^2.
\]
If we consider two different vector variables $x$ and $y$, then we define
\[
\langle x,y \rangle = \frac{1}{2} \{ x, y \} =  - \sum_{i=1}^{m} x_i y_i +\frac{1}{2} \sum_{j=1}^{n}({x \grave{}}_{2j-1}{y \grave{}}_{2j} - {x \grave{}}_{2j} {y \grave{}}_{2j-1}).
\]
It is clear that this product is symmetric: $\langle x,y \rangle = \langle y,x \rangle$. Moreover, it is invariant under co-ordinate changes of the type $SO(m)\times Sp(2n)$. In the sequel we will use $\langle x,y \rangle$ as the kernel for our super Fourier transform.

The super Dirac operator is defined as 
\[
\px = \upxb-\upx = 2 \sum_{j=1}^{n} \left( {e \grave{}}_{2j} \partial_{{x\grave{}}_{2j-1}} - {e \grave{}}_{2j-1} \partial_{{x\grave{}}_{2j}}  \right)-\sum_{j=1}^m e_j \pj,
\]
its square being the super Laplace operator
\[
\Delta = \px^2 =4 \sum_{j=1}^n \partial_{{x \grave{}}_{2j-1}} \partial_{{x \grave{}}_{2j}} -\sum_{j=1}^{m} \pj^2.
\]
The bosonic part of the latter operator is $\Delta_b = -\sum_{j=1}^{m} \pj^2$, which is nothing else but the classical Laplace operator; the fermionic part is $\Delta_f = 4 \sum_{j=1}^n \partial_{{x \grave{}}_{2j-1}} \partial_{{x \grave{}}_{2j}}$.

The Euler operator in superspace is defined as
\[
\mE = \sum_{j=1}^m x_j \pj+\sum_{j=1}^{2n} {x \grave{}}_{j} \pjb
\]
and allows for the decomposition of $\cP$ into spaces of homogeneous $\cC$-valued polynomials:
\[
\cP = \bigoplus_{k=0}^{\infty} \cP_k, \quad \cP_k=\left\{ \omega \in \cP \; | \; \mE \omega=k \omega \right\}.
\]

For the other important operators in super Clifford analysis we refer the reader to \cite{DBS1,DBS4}. Letting $\px$ act on $x$ we find that

\[
\px x = x \px = m-2n = M
\]
where $M$ is the so-called super-dimension. This super-dimension is of the utmost importance (see e.g. \cite{DBS5}), as it gives a global characterization of our superspace. The physical meaning of this parameter is discussed in \cite{DBS3}.

The basic calculational rules for the Dirac and Laplace operator on the algebra $\cP$ are given in the following lemma (see \cite{DBS1}). They are a consequence of the operator equality
\[
x \px + \px x = 2 \mE + M.
\]

\begin{lemma}
Let $s \in \mN$ and $R_k \in \cP_k$, then
\begin{eqnarray*}
\px(x^{2s} R_k) &=& 2 s x^{2s-1}R_k + x^{2s} \px R_k\\
\px(x^{2s+1} R_k) &=& (2k + M + 2s) x^{2s}R_k - x^{2s+1} \px R_k\\
\Delta (x^{2s} R_{k})&=& 2s(2k+M+2s-2) x^{2s-2} R_k + x^{2s} \Delta R_k.
\end{eqnarray*}
\end{lemma}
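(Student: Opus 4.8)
The plan is to prove the first two identities simultaneously by induction on $s$, invoking only the fundamental operator relation $x \px + \px x = 2\mE + M$ together with the elementary fact that $x$ is homogeneous of degree one, $x \in \cP_1$, so that $x^p R_k \in \cP_{p+k}$ and $\mE$ acts on this element as the scalar $p+k$. Write $(A_s)$ and $(B_s)$ for the two identities. The base case $(A_0)$ is the tautology $\px R_k = \px R_k$ (the coefficient $2s$ kills the spurious $x^{-1}$ term), while $(B_0)$ is just the fundamental relation applied to $R_k \in \cP_k$: since $\mE R_k = k R_k$, one gets $\px(x R_k) = (2\mE + M)R_k - x \px R_k = (2k+M)R_k - x \px R_k$.

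For the inductive step I would first pass from $(A_s)$ to $(B_s)$. Applying the fundamental relation to the element $\omega = x^{2s}R_k \in \cP_{2s+k}$ gives $\px(x\,\omega) = (2(2s+k)+M)\,\omega - x\,\px\omega$; substituting $(A_s)$ for $\px\omega$ and using $x\cdot x^{2s-1} = x^{2s}$ collapses the correction term $x\cdot 2s\, x^{2s-1}R_k = 2s\, x^{2s}R_k$, so that collecting coefficients yields the factor $(4s+2k+M)-2s = 2k+M+2s$ of $(B_s)$. The passage $(B_s)\Rightarrow(A_{s+1})$ is identical in structure: apply the fundamental relation to $\omega = x^{2s+1}R_k \in \cP_{2s+1+k}$, substitute $(B_s)$, and simplify; here the bookkeeping gives $(2(2s+1+k)+M)-(2k+M+2s) = 2s+2 = 2(s+1)$, which is precisely the coefficient appearing in $(A_{s+1})$. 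Chaining $(A_0)\Rightarrow(B_0)\Rightarrow(A_1)\Rightarrow(B_1)\Rightarrow\cdots$ then yields every case.

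The third identity follows by one further application of $\px$, with no new ideas. Writing $\Delta = \px^2$ and using $(A_s)$, I would compute $\Delta(x^{2s}R_k) = \px\bigl(2s\, x^{2s-1}R_k + x^{2s}\px R_k\bigr)$. The first summand is evaluated by $(B_{s-1})$, since $x^{2s-1}=x^{2(s-1)+1}$, producing $(2k+M+2s-2)x^{2s-2}R_k - x^{2s-1}\px R_k$; the second is evaluated by $(A_s)$ applied with $\px R_k \in \cP_{k-1}$ in the role of $R_k$, producing $2s\, x^{2s-1}\px R_k + x^{2s}\Delta R_k$. After multiplying the first summand through by the outer factor $2s$, the two cross terms $\mp 2s\, x^{2s-1}\px R_k$ cancel, leaving $2s(2k+M+2s-2)x^{2s-2}R_k + x^{2s}\Delta R_k$, as desired.

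I expect the only real hazard to be the degree bookkeeping: the fundamental relation must be applied to the whole intermediate element $x^{2s}R_k$ or $x^{2s+1}R_k$, so the Euler eigenvalue entering each step is the shifted value $2s+k$ (respectively $2s+1+k$), not $k$. Keeping these shifts straight is exactly what generates the $s$-dependent coefficients in the final formulas; everything else is routine collection of scalar terms and repeated use of associativity of multiplication by powers of $x$.
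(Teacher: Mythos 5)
Your induction is correct, and it is precisely the route the paper itself indicates: the paper states the lemma without detailed proof, remarking only that the identities are ``a consequence of the operator equality $x \px + \px x = 2 \mE + M$'' (with details deferred to the cited reference), and your alternating induction $(A_s)\Rightarrow(B_s)\Rightarrow(A_{s+1})$, with the Euler eigenvalue correctly shifted to $2s+k$ resp.\ $2s+1+k$ at each application, is the standard way of fleshing that remark out. The derivation of the Laplace identity by applying $\px$ twice, using $(B_{s-1})$ and $(A_s)$ with $\px R_k \in \cP_{k-1}$ so that the cross terms $\pm 2s\, x^{2s-1}\px R_k$ cancel, is likewise exactly what the operator identity yields.
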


We have the following important definitions:
\begin{definition}
A (super)-spherical harmonic of degree $k$ is an element $H_k \in \cP$ satisfying
\begin{eqnarray*}
\Delta H_k&=&0\\
\mE H_k &=& k H_k, \quad \mbox{i.e. } H_k \in \cP_k.
\end{eqnarray*}
The space of spherical harmonics of degree $k$ will be denoted by $\cH_k$.
\end{definition}

\begin{definition}
A (super)-spherical monogenic of degree $k$ is an element $M_k \in \cP$ satisfying
\begin{eqnarray*}
\Delta M_k&=&0\\
\mE M_k &=& k M_k, \quad \mbox{i.e. } M_k \in \cP_k.
\end{eqnarray*}
The space of spherical monogenics of degree $k$ will be denoted by $\cM_k$.
\end{definition}

The space of spherical harmonics of degree $k$ in the purely bosonic case (i.e. $n=0$) is denoted by $\cH_k^b$. In the purely fermionic case (i.e. $m=0$) we use the notation $\cH_k^f$.

In \cite{DBE1} we have proven the following decomposition of the space of spherical harmonics of degree $k$, which we will need when studying eigenfunctions of the super Fourier transform.
\begin{theorem}[Decomposition of $\cH_k$]
The space $\cH_k$ decomposes under the action of $SO(m)\times Sp(2n)$ into irreducible pieces as follows
\begin{equation}
\cH_{k} = \bigoplus_{i=0}^{\min(n,k)} \cH^b_{k-i} \otimes \cH^f_{i} \;\; \oplus \;\; \bigoplus_{j=0}^{\min(n, k-1)-1} \bigoplus_{l=1}^{\min(n-j,\lfloor \frac{k-j}{2} \rfloor)} f_{l,k-2l-j,j} \cH^b_{k-2l-j} \otimes \cH^f_{j},
\label{decompintoirreps}
\end{equation}
with $f_{l,k-2l-j,j}$ the polynomials given by
\[
f_{k,p,q} = \sum_{i=0}^k \binom{k}{i} \frac{(n-q-i)!}{\Gamma(\frac{m}{2} + p+ k-i)} \ux^{2k-2i} \uxb^{2i}.
\]
\label{completedecomp}
\end{theorem}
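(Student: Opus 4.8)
The plan is to reduce the statement to a one-dimensional problem on each ``ladder'' obtained by fixing one bosonic and one fermionic spherical harmonic, and then to read off the harmonic combinations by solving a two-term recurrence. First I would combine the classical (bosonic) Fischer decomposition with its purely fermionic counterpart (the $m=0$ specialisation of the present formalism) to write, in degree $k$,
\[
\cP_k = \bigoplus_{p+q+2a+2b=k} \ux^{2a}\uxb^{2b}\,\cH^b_p \otimes \cH^f_q ,
\]
where $\cH^f_q=0$ for $q>n$ and $\uxb^{2b}\cH^f_q=0$ for $b>n-q$ (fermionic nilpotency). Since $\Delta=\Delta_b+\Delta_f$ commutes with the $SO(m)\times Sp(2n)$-action and each $\cH^b_p\otimes\cH^f_q$ is irreducible under this group, computing $\cH_k=\ker\Delta|_{\cP_k}$ reduces to finding, for every pair $(p,q)$, the multiplicity of $\cH^b_p\otimes\cH^f_q$, i.e.\ the dimension of the harmonic part of the finite ladder $L_{p,q}=\bigoplus_{a+b=N}\ux^{2a}\uxb^{2b}\,\cH^b_p\otimes\cH^f_q$ with $2N=k-p-q$, which is stable under $\Delta$.

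On such a ladder I would specialise the third identity of the Lemma to the bosonic case ($n=0$, $M=m$) and to the fermionic case ($m=0$, $M=-2n$), obtaining for $H^b_p\in\cH^b_p$, $H^f_q\in\cH^f_q$
\begin{eqnarray*}
\Delta_b(\ux^{2a} H^b_p) &=& 2a\,(2p+m+2a-2)\,\ux^{2a-2}H^b_p ,\\
\Delta_f(\uxb^{2b} H^f_q) &=& 2b\,(2q-2n+2b-2)\,\uxb^{2b-2}H^f_q .
\end{eqnarray*}
Writing a general element of $L_{p,q}$ as $\omega=\sum_{i=0}^{N} c_i\,\ux^{2(N-i)}\uxb^{2i}H^b_p H^f_q$ and collecting in $\Delta\omega$ the coefficient of $\ux^{2(N-1-j)}\uxb^{2j}H^b_p H^f_q$ yields, after dropping a common factor $2$, the two-term recurrence
\[
(N-j)\,(2p+m+2(N-j)-2)\,c_j \;+\; (j+1)\,(2q-2n+2j)\,c_{j+1} \;=\; 0 .
\]
The bosonic factor $2p+m+2(N-j)-2$ never vanishes for $m>0$, so this relation determines $c_1,\dots,c_N$ uniquely from $c_0$; solving the telescoping product gives, up to one overall constant, exactly the coefficients $\binom{l}{i}(n-q-i)!/\Gamma(\tfrac{m}{2}+p+l-i)$ of $f_{l,p,q}$ with $l=N$. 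This simultaneously verifies that each $f_{l,p,q}H^b_pH^f_q$ is super-harmonic and shows the multiplicity is at most one.

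The remaining, and I expect the most delicate, point is the bookkeeping at the fermionic end of the ladder, which fixes the summation ranges. The fermionic factor $2q-2n+2j=-2(n-q-j)$ vanishes precisely at $j=n-q$, so the chain closes into a genuine nonzero harmonic exactly when $N\le n-q$: for $N>n-q$ the required top term $\uxb^{2N}H^f_q$ is already zero by nilpotency and the recurrence over-determines, producing no new piece. Combined with $p=k-2l-j\ge 0$ this gives the stated bound $1\le l\le\min(n-j,\lfloor\frac{k-j}{2}\rfloor)$ for the second family (with $q=j$, $p=k-2l-j$), while the case $N=0$ is simply the product $H^b_pH^f_q$ of two harmonics, furnishing the first family $\bigoplus_{i=0}^{\min(n,k)}\cH^b_{k-i}\otimes\cH^f_i$. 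To conclude that these pieces exhaust $\cH_k$ with no overcounting, I would finally compare dimensions, matching $\sum \dim\cH^b_p\,\dim\cH^f_q$ over the listed summands against $\dim\cH_k=\dim\cP_k-\dim\Delta\cP_k$ computed from the super Fischer decomposition; the main obstacle is carrying out this dimension check together with the precise boundary analysis so that every ladder contributes exactly the multiplicity claimed.
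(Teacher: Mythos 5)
Your proposal is sound, but note that this paper contains no proof of the theorem to compare against: the decomposition is imported from \cite{DBE1}, and the strategy there is essentially the one you describe — split $\cP_k$ via the bosonic Fischer decomposition together with its fermionic counterpart (formula (\ref{Fischer}) here), and determine the harmonic element on each ladder from the two-term recurrence generated by $\Delta=\Delta_b+\Delta_f$ acting through the calculational lemma with $M=m$ and $M=-2n$. Your recurrence checks out: it yields $c_{i+1}/c_i=(l-i)\left(\frac{m}{2}+p+l-i-1\right)/\bigl((i+1)(n-q-i)\bigr)$, which reproduces the coefficients $\binom{l}{i}(n-q-i)!/\Gamma(\frac{m}{2}+p+l-i)$ of $f_{l,p,q}$ up to normalization, and your boundary analysis (for $N>n-q$ the equation at $j=n-q$ forces $c_{n-q}=0$ and hence all $c_j=0$, since the missing top terms $\uxb^{2i}H^f_q$, $i>n-q$, vanish by nilpotency) gives exactly the stated ranges $1\le l\le\min(n-j,\lfloor\frac{k-j}{2}\rfloor)$ and $0\le j\le\min(n,k-1)-1$, with the case $N=0$ furnishing the first family.

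One genuine simplification: the dimension count you flag as the main remaining obstacle is unnecessary. For fixed $(p,q)$, the operator $\Delta$ maps the degree-$k$ component of the ladder into the degree-$(k-2)$ component of the \emph{same} ladder, and ladders with distinct $(p,q)$ are linearly independent direct summands of $\cP_{k-2}$; hence $\ker\Delta|_{\cP_k}$ splits automatically as the direct sum of the per-ladder kernels you computed, so exhaustion and multiplicity one come for free, with no global dimension check (irreducibility of $\cH^b_p\otimes\cH^f_q$ is then only needed to call the summands irreducible, a classical fact for $SO(m)$ and $Sp(2n)$ separately, not to establish the direct sum). The one point you should make explicit in this linear-algebraic version is that a general element of the $(p,q)$-block reads $\sum_i \ux^{2(N-i)}\uxb^{2i}h_i$ with $h_i\in\cH^b_p\otimes\cH^f_q$ possibly varying with $i$; your recurrence, read with vector-valued coefficients, forces each $h_{i+1}$ to be a scalar multiple of $h_i$, which reduces matters to your scalar ansatz.
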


For studying integral transforms in superspace we need of course a broader set of functions. For our purposes we define the function spaces
\[
\cF(\Omega)_{m|2n} = \cF(\Omega) \otimes \Lambda_{2n} \otimes \cC
\]
where $\Omega$ is an open set in $\mR^m$ and where $\cF(\Omega)$ stands for $\cD(\Omega)$, $C^{k}(\Omega)$, $L_{p}(\Omega)$, $L_{1}^{\mbox{\footnotesize loc}}(\Omega)$, $\cS(\mR^m)$, $\ldots$.

Integration over superspace is given by the so-called Berezin integral (see \cite{MR0208930,MR732126}), defined as
\[
\int_{\mR^{m|2n},x} f = \int_{\mR^m} dV(\ux) \int_{B,x} f
\]
with
\[
\int_{B,x} = \pi^{-n} \partial_{{x \grave{}}_{2n}} \ldots \partial_{{x \grave{}}_{1}} = \frac{(-1)^n \pi^{-n}}{4^n n!} \upxb^{2n}.
\]
The subscript $x$ means that we are integrating with respect to the $x$ variables. The numerical factor $\pi^{-n}$ is necessary to obtain more symmetric formulae. It also appears naturally in our treatment of the Berezin integral (see \cite{DBS5}, theorem 11).

This definition means that one first has to differentiate $f$ with respect to all anti-commuting variables and then to integrate w.r.t. the commuting variables in the usual way. This integration recipe may seem rather haphazard but it can be explained in a satisfactory way using harmonic analysis in superspace (see \cite{DBS5,DBE1}).

We can also introduce a super Dirac distribution as
\[
\delta(x) = \pi^{n} \delta(\ux) {x \grave{}}_{1} \ldots {x \grave{}}_{2n} = \frac{\pi^{n}}{n!} \delta(\ux) \uxb^{2n}
\]
with $\delta(\ux)$ the classical Dirac distribution in $\mR^m$. As we always work with an even number of anti-commuting variables, we have that both the Berezin integral and the Dirac distribution are even objects, independently of the dimension.

Let us now introduce the Clifford-Hermite polynomials (see \cite{DBS3}) by means of their Rodrigues formula. Let $M_k$ be a spherical monogenic of degree $k$, then we define
\[
CH_{t,M}(M_k)(x)= \exp(-x^2/2) (\px + x)^t  \exp(x^2/2) M_k = \exp(-x^2) (\px )^t  \exp(x^2) M_k.
\]
It is easily seen that
\[
CH_{t,M}(M_k)(x)= CH_{t,M,k}(x) M_k
\]
with $CH_{t,M,k}(x)$ a polynomial in the vector variable $x$, not depending on the precise form of $M_k$, but only on the integer $k$.

These Clifford-Hermite polynomials have an important physical application: they are eigenfunctions of a harmonic oscillator in superspace (see \cite{DBS3}). In the sequel we will also need a rescaled version of them, defined by
\[
\widetilde{CH}_{t,M}(M_k)(x)= \exp(-x^2/2) (\px )^t  \exp(x^2/2) M_k.
\]

We can also construct scalar-valued versions of the Clifford-Hermite polynomials. This is due to the fact that $(\px + x)^2$ is a scalar operator:
\[
(\px + x)^2 = \Delta + x^2 + 2 \mE + M.
\]
We hence obtain the scalar valued polynomials
\[
CH_{2t,M}(H_k)(x)= \exp(-x^2/2) (\px + x)^{2t}  \exp(x^2/2) H_k
\]
with $H_k$ a spherical harmonic of degree $k$.

Note that in the case where $M \not \in -2\mN$ the Clifford-Hermite polynomials form a basis for $\cP$. Consequently, the ($\cC$-valued) functions $\phi_{j,k,l}(x)$ defined by
\begin{eqnarray*}
\phi_{j,k,l}(x) &=& (\px + x )^j M_k^{(l)} \exp{x^2/2}\\
&=& CH_{j,M,k}(x) M_k^{(l)} \exp{x^2/2},
\end{eqnarray*}
with $M_k^{(l)}$ a basis of $\cM_k$, indexed by $l$, form a basis for the function space $\cS(\mR^m)_{m|2n}$. Similarly, we have that the (scalar) functions $\psi_{j,k,l}(x)$ defined by
\begin{eqnarray*}
\psi_{j,k,l}(x) &=& (\px + x )^{2j} H_k^{(l)} \exp{x^2/2}\\
&=& CH_{2j,M,k}(x) H_k^{(l)} \exp{x^2/2},
\end{eqnarray*}
with $H_k^{(l)}$ a basis of $\cH_k$, indexed by $l$, form a basis for the space $\cS(\mR^m)\otimes \Lambda_{2n}$.

Finally, we define the convolution of two functions $f$ and $g$ as
\[
f *g(u) = \int_{\mR^{m|2n},x} f(u-x)g(x).
\]
This operation is clearly not commutative; however note that
\begin{eqnarray*}
f * g(u) &=&\int_{\mR^{m|2n},x} f(u-x)g(x)\\
&=& \int_{\mR^{m|2n},x} \int_{\mR^{m|2n},y} \delta(y+x-u) f(y) g(x)\\
&=& \int_{\mR^{m|2n},y} f(y) \int_{\mR^{m|2n},x} \delta(y+x-u)  g(x)\\
&=& \int_{\mR^{m|2n},y} f(y)g(u-y).
\end{eqnarray*}

\section{The bosonic and fermionic Fourier transform}
\label{BosFermFouriertrafo}

\subsection{The bosonic Fourier transform}
We define the classical Fourier transform of a function $f \in L_1(\mR^m)$ as follows:
\[
\cF_{m | 0}^{\pm}(f)(y) =(2 \pi)^{-m/2} \int_{\mR^m} d V(\ux) \exp{\left(\pm i \sum_{i=1}^{m} x_i y_i   \right)} f(x).
\]
The properties of this transform are very well known, we refer the reader to e.g. \cite{MR0304972}.
In the sequel we will need the following important theorem.

\begin{theorem}
Let $H_l(x) \in \cH_l^b$ be a spherical harmonic of degree $l$. Then one has
\[
\cF_{m | 0}^{\pm} (H_l(x) \exp(\ux^2/2))(y) = (\pm i)^l H_l(y) \exp(\uy^2/2).
\]
\label{bosharmfour}
\end{theorem}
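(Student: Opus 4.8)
The statement is the purely bosonic (classical) Bochner--Hecke identity: when $n=0$ one has $\ux^2=-\sum_{i=1}^m x_i^2$, so $\exp(\ux^2/2)$ is the ordinary Gaussian and $\cH_l^b$ consists of the solid harmonic polynomials of degree $l$. Since the Clifford coefficients of $H_l$ play no role in the integration (they factor out), I may treat $H_l$ as a scalar harmonic polynomial. The plan is to reduce everything to two ingredients: the self-reciprocity of the Gaussian,
\[
\cF_{m|0}^{\pm}(\exp(\ux^2/2))(y)=\exp(\uy^2/2),
\]
which is immediate by completing the square (the normalisation $(2\pi)^{-m/2}$ is chosen exactly so that the constant is $1$), and a Rodrigues-type identity for harmonic polynomials.

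First I would transfer the polynomial prefactor onto the Fourier kernel. Since $\partial_{y_i}\exp(\pm i\sum_k x_ky_k)=\pm i\,x_i\exp(\pm i\sum_k x_ky_k)$, we have $x_i\exp(\pm i\sum_k x_ky_k)=\mp i\,\partial_{y_i}\exp(\pm i\sum_k x_ky_k)$, and therefore
\[
H_l(x)\exp\Big(\pm i\textstyle\sum_k x_ky_k\Big)=H_l(\mp i\,\partial_{y_1},\ldots,\mp i\,\partial_{y_m})\exp\Big(\pm i\textstyle\sum_k x_ky_k\Big),
\]
where $H_l(\partial_{y_1},\ldots,\partial_{y_m})$ denotes the constant-coefficient operator obtained by substituting $\partial_{y_i}$ for $x_i$. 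The rapid decay of the Gaussian justifies differentiating under the integral sign, so these $y$-derivatives pull out of $\int_{\mR^m}dV(\ux)$. Using homogeneity, $H_l(\mp i\,\cdot)=(\mp i)^l H_l(\,\cdot\,)$, together with the base case, I obtain
\[
\cF_{m|0}^{\pm}(H_l(x)\exp(\ux^2/2))(y)=(\mp i)^l\,H_l(\partial_{y_1},\ldots,\partial_{y_m})\exp(\uy^2/2).
\]

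It then remains to prove the identity $H_l(\partial_{y_1},\ldots,\partial_{y_m})\exp(\uy^2/2)=(-1)^l H_l(y)\exp(\uy^2/2)$ for harmonic $H_l$; combined with $(\mp i)^l(-1)^l=(\pm i)^l$ this gives the theorem. I would prove it by conjugating with the Gaussian: from $\partial_{y_i}\big(\exp(\uy^2/2)\,g\big)=\exp(\uy^2/2)\,(\partial_{y_i}-y_i)\,g$ one reads off $\partial_{y_i}=\exp(\uy^2/2)\,(\partial_{y_i}-y_i)\,\exp(-\uy^2/2)$, and since the operators $A_i:=\partial_{y_i}-y_i$ commute, this gives
\[
H_l(\partial_{y_1},\ldots,\partial_{y_m})\exp(\uy^2/2)=\exp(\uy^2/2)\,H_l(A_1,\ldots,A_m)\cdot 1.
\]
Evaluating $H_l(A_1,\ldots,A_m)\cdot 1$, the top-degree contribution is $H_l(-y_1,\ldots,-y_m)=(-1)^l H_l(y)$, while every lower-degree correction arises from contracting $\partial$'s against $y$'s via $[\partial_{y_i},-y_i]=-1$ and thus assembles into multiples of $\Delta_b^{s}H_l$ with $s\ge 1$. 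The main obstacle is precisely controlling these correction terms and showing they vanish: this is where harmonicity $\Delta_b H_l=0$ is used, and it is the heart of the argument, everything else being routine Fourier bookkeeping. Once the corrections are gone, the three factors $(\mp i)^l$, $(-1)^l$ and $H_l(y)\exp(\uy^2/2)$ recombine into the claimed $(\pm i)^l H_l(y)\exp(\uy^2/2)$.
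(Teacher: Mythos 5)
Your proof is correct, but it necessarily differs from the paper, which offers no argument for theorem \ref{bosharmfour} at all: the proof there is a bare citation of Stein and Weiss \cite{MR0304972}, where the statement is the classical Bochner--Hecke identity, proved essentially by expanding $H_l$ in powers $\langle \ux,\zeta\rangle^l$ of isotropic vectors $\zeta\in\mC^m$ (i.e. $\sum_i\zeta_i^2=0$) and completing the square in the Gaussian integral. Your operational route --- trading $H_l(x)$ against the kernel for $H_l(\mp i\,\partial_y)$, pulling the derivatives outside the integral, and reducing to the Rodrigues-type identity $H_l(\partial_y)\exp(\uy^2/2)=(-1)^l H_l(y)\exp(\uy^2/2)$ --- is a legitimate, self-contained alternative: the sign bookkeeping checks out ($x_i e^{\pm i x\cdot y}=\mp i\,\partial_{y_i}e^{\pm i x\cdot y}$ and $(\mp i)^l(-1)^l=(\pm i)^l$), differentiation under the integral is harmless against the Gaussian, and the Clifford coefficients do factor out. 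The one step you leave informal is the Wick bookkeeping for $H_l(A_1,\ldots,A_m)\cdot 1$; it can be closed cleanly by proving, by induction on the degree of $p$ (it suffices to pass from $p$ to $y_i p$), the closed formula
\[
p(\partial_{y_1},\ldots,\partial_{y_m})\exp(\uy^2/2)=\exp(\uy^2/2)\,\sum_{s\ge 0}\frac{1}{2^s s!}\,(\Delta_b^s\, p)(-y)
\]
with the paper's convention $\Delta_b=-\sum_j\partial_{y_j}^2$; for homogeneous $p=H_l$ one has $(\Delta_b^s H_l)(-y)=(-1)^{l-2s}(\Delta_b^s H_l)(y)=(-1)^l(\Delta_b^s H_l)(y)$, and harmonicity kills every $s\geq 1$ term, leaving exactly your $(-1)^l H_l(y)$. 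It is worth noting how your argument meshes with the paper's own machinery: your key identity is precisely the bosonic ($n=0$) case of corollary \ref{CorDiffHarm}, which the paper obtains only \emph{afterwards} as a consequence of theorem \ref{fouriertransformsphericalharm}; you run the implication in the opposite direction, so there is no circularity, and the correction terms you discard are, up to normalization, the rescaled Hermite polynomials $\widetilde{CH}_{2s,M,l-2s}$ that reappear in the paper's converse lemma. As for what each approach buys: the citation keeps the paper lean and rests on classical harmonic analysis, while your proof is elementary and of exactly the operational kind the paper exploits in superspace, so with the corresponding fermionic calculus rules (lemma \ref{fermcalculusrules}) it would adapt to the setting of theorem \ref{fermharmfour}, where no classical reference is available and the paper instead resorts to a Funk--Hecke-type expansion.
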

\begin{proof}
See e.g. \cite{MR0304972}.
\end{proof}

\subsection{The fermionic Fourier transform}

We start by introducing the following kernel:
\begin{equation}
K^{\pm}(x,y) = \exp( \mp \frac{i}{2} \sum_{j=1}^{n} ({x \grave{}}_{2j-1}{y \grave{}}_{2j} - {x \grave{}}_{2j} {y \grave{}}_{2j-1})).
\label{fermkernel}
\end{equation}

Easy calculations show that
\[
\begin{array}{lll}
\partial_{{x\grave{}}_{2i}} K^{\pm}(x,y) & = & \pm \frac{i}{2} {y \grave{}}_{2i-1} K^{\pm}(x,y)\\
\vspace{-3mm}\\
\partial_{{x\grave{}}_{2i-1}} K^{\pm}(x,y) & = & \mp \frac{i}{2} {y \grave{}}_{2i} K^{\pm}(x,y)\\
\vspace{-3mm}\\
\partial_{{y\grave{}}_{2i}} K^{\pm}(x,y) & = & \pm \frac{i}{2} {x \grave{}}_{2i-1} K^{\pm}(x,y)\\
\vspace{-3mm}\\
\partial_{{y\grave{}}_{2i-1}} K^{\pm}(x,y) & = & \mp \frac{i}{2} {x \grave{}}_{2i} K^{\pm}(x,y).
\end{array}
\]

Using the kernel (\ref{fermkernel}) we define the fermionic Fourier transform on the Grassmann algebra $\Lambda_{2n}$, generated by the ${x \grave{}}_i$, as
\begin{equation}
\cF_{0 | 2n}^{\pm} (\cdot)(y) = (2\pi)^n \int_{B,x} K^{\pm}(x,y) \; (\cdot)
\end{equation}

\begin{remark}
Note that the kernel $K^{\pm}(x,y)$ is symmetric: 
\[
K^{\pm}(x,y) = K^{\pm}(y,x).
\]
This is clearly not the case if one would use the kernel
\[
\exp( \mp i \sum_{j=1}^{2n} {x \grave{}}_{j}{y \grave{}}_{j})
\]
as is done in e.g. \cite{MR1147140,MR778559,MR1192484}.
Moreover, our definition is invariant under symplectic changes of variables, whereas the other approach is invariant under the orthogonal group.
\end{remark}

Now we have the following basic lemma.
\begin{lemma}
If $g \in \Lambda_{2n}$, then one has:
\[
\begin{array}{lll}
\cF_{0 | 2n}^{\pm} (\partial_{{x\grave{}}_{2i}} g) & = & \mp \frac{i}{2} {y \grave{}}_{2i-1} \cF_{0 | 2n}^{\pm} ( g)\\
\vspace{-3mm}\\
\cF_{0 | 2n}^{\pm} (\partial_{{x\grave{}}_{2i-1}} g) & = & \pm \frac{i}{2} {y \grave{}}_{2i} \cF_{0 | 2n}^{\pm} ( g)\\
\vspace{-3mm}\\
\cF_{0 | 2n}^{\pm} ({x\grave{}}_{2i} g) & = & \pm 2i \; \partial_{{y\grave{}}_{2i-1}} \cF_{0 | 2n}^{\pm} ( g)\\
\vspace{-3mm}\\
\cF_{0 | 2n}^{\pm} ({x\grave{}}_{2i-1} g) & = & \mp 2i \; \partial_{{y\grave{}}_{2i}} \cF_{0 | 2n}^{\pm} ( g).
\end{array}
\]
\label{fermcalculusrules}
\end{lemma}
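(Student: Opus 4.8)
**The plan is to prove the four identities of Lemma 2 directly from the definition of $\cF_{0|2n}^{\pm}$ and the four kernel-derivative formulas established just before the lemma statement.** The first two identities (the ones where a $\partial_{{x\grave{}}_i}$ sits inside the transform) should follow from integration by parts in the Berezin integral, while the last two (where a variable ${x\grave{}}_i$ multiplies $g$) should follow by differentiating the transform with respect to the $y$-variables and passing the $\partial_{{y\grave{}}_i}$ through the Berezin integral onto the kernel. The key structural fact I would rely on is that differentiation in the anti-commuting variables and the Berezin integral both act only on the Grassmann part, so the kernel-derivative formulas listed above are exactly what converts one into the other.

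For the first pair, consider $\cF_{0|2n}^{\pm}(\partial_{{x\grave{}}_{2i}} g) = (2\pi)^n \int_{B,x} K^{\pm}(x,y)\,\partial_{{x\grave{}}_{2i}} g$. I would integrate by parts to move $\partial_{{x\grave{}}_{2i}}$ off $g$ and onto the kernel. Here the main subtlety is the sign rule for integration by parts in a Grassmann algebra: since $\partial_{{x\grave{}}_{2i}}$ is an odd derivation and $K^{\pm}$ is an even element of $\Lambda_{2n}$ (it is an exponential of a quadratic, hence a sum of even-degree terms), the Leibniz rule gives $\partial_{{x\grave{}}_{2i}}(K^{\pm} g) = (\partial_{{x\grave{}}_{2i}} K^{\pm}) g + K^{\pm} (\partial_{{x\grave{}}_{2i}} g)$ with no extra sign, and the Berezin integral of a total derivative $\partial_{{x\grave{}}_{2i}}(K^{\pm} g)$ vanishes. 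Substituting the kernel-derivative formula $\partial_{{x\grave{}}_{2i}} K^{\pm} = \pm \frac{i}{2} {y\grave{}}_{2i-1} K^{\pm}$ then yields the claimed identity, and the analogous computation with $\partial_{{x\grave{}}_{2i-1}} K^{\pm} = \mp \frac{i}{2} {y\grave{}}_{2i} K^{\pm}$ handles the second line.

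For the second pair I would proceed dually: starting from $\partial_{{y\grave{}}_{2i-1}} \cF_{0|2n}^{\pm}(g) = (2\pi)^n \int_{B,x} \bigl(\partial_{{y\grave{}}_{2i-1}} K^{\pm}(x,y)\bigr) g$, I pull the $y$-derivative inside the Berezin integral (which is legitimate since it integrates only over the $x$-variables and $g$ does not depend on $y$), and then apply $\partial_{{y\grave{}}_{2i-1}} K^{\pm} = \mp \frac{i}{2} {x\grave{}}_{2i} K^{\pm}$ to recognize the right-hand side as a multiple of $\cF_{0|2n}^{\pm}({x\grave{}}_{2i} g)$; solving for the latter gives the third identity with the factor $\pm 2i$, and the fourth follows symmetrically from $\partial_{{y\grave{}}_{2i-1}} K^{\pm}$'s partner formula.

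\textbf{The main obstacle I anticipate is bookkeeping of signs}, on two fronts. First, in the integration-by-parts step one must be careful that the vanishing of $\int_{B,x} \partial_{{x\grave{}}_{2i}}(\cdots)$ genuinely holds and that no anticommutation sign is dropped when $\partial_{{x\grave{}}_{2i}}$ is commuted past factors; the even parity of $K^{\pm}$ is what keeps this clean, so I would state that parity observation explicitly. Second, the variable ${x\grave{}}_{2i}$ and the kernel factors must be ordered consistently, since in the Grassmann algebra ${x\grave{}}_{2i}$ anticommutes with the odd part of $g$; one should check that the identity is stated for $g$ of homogeneous parity or verify that both even and odd $g$ give the same signs. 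Beyond these sign checks the argument is a short and essentially mechanical application of the four kernel formulas, so I expect no genuine conceptual difficulty.
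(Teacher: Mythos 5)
Your proposal is correct and follows essentially the same route as the paper: the paper proves the first identity by exactly your integration-by-parts argument (Leibniz rule, vanishing of the Berezin integral of a total derivative since $\partial_{{x\grave{}}_{2i}}^2=0$, then the kernel-derivative formula) and declares the remaining three ``completely similar,'' with your dual computation via $\partial_{{y\grave{}}_{2i-1}}$ being precisely what that similarity amounts to for the multiplication identities. Your explicit attention to the even parity of $K^{\pm}$, which justifies both the sign-free Leibniz step and commuting ${x\grave{}}_{2i}$ past the kernel, is sound and fills in details the paper leaves implicit.
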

\begin{proof}
We only prove the first relation, the other proofs being completely similar:
\begin{eqnarray*}
\cF_{0 | 2n}^{\pm} (\partial_{{x\grave{}}_{2i}} g) &=& (2\pi)^n \int_{B,x} K^{\pm}(x,y)  (\partial_{{x\grave{}}_{2i}} g)\\
&=& (2\pi)^n \int_{B,x} \partial_{{x\grave{}}_{2i}} \left[ K^{\pm}(x,y)   g \right] - (2\pi)^n \int_{B,x} \left[ \partial_{{x\grave{}}_{2i}} K^{\pm}(x,y) \right]  g\\
&=& - (2\pi)^n \int_{B,x} \left[ \partial_{{x\grave{}}_{2i}} K^{\pm}(x,y) \right]  g\\
&=& \mp \frac{i}{2} {y\grave{}}_{2i-1} (2\pi)^n \int_{B,x}   K^{\pm}(x,y)  g\\
&=&\mp \frac{i}{2} {y \grave{}}_{2i-1} \cF_{0 | 2n}^{\pm} ( g).
\end{eqnarray*}
\end{proof}

Next we consider the action of the Dirac operator and the vector variable. Using the previous lemma this immediately leads to the following.
\begin{corollary}
One has the following relations
\[
\begin{array}{lllllll}
\cF_{0 | 2n}^{\pm} (\upxb  g) & = & \pm i \uyb  \cF_{0 | 2n}^{\pm} ( g)&\;&\cF_{0 | 2n}^{\pm} (\Delta_f g) & = & - \uyb^2 \cF_{0 | 2n}^{\pm} ( g)\\
\vspace{-3mm}\\
\cF_{0 | 2n}^{\pm} (\uxb g) & = & \pm i \upyb \cF_{0 | 2n}^{\pm} ( g)&\;&\cF_{0 | 2n}^{\pm} (\uxb^2 g) & = & - \Delta_f \cF_{0 | 2n}^{\pm} ( g).
\end{array}
\]
\end{corollary}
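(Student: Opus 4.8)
The plan is to read off all four identities from Lemma \ref{fermcalculusrules} by expanding the operators in coordinates. The two identities in the left column are the substantial ones; the two on the right then follow by squaring, so I would establish the left column first and deduce the right column from it.

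For the first identity I would start from the coordinate form of the fermionic Dirac operator, $\upxb = 2\sum_{j=1}^n ({e \grave{}}_{2j}\partial_{{x \grave{}}_{2j-1}} - {e \grave{}}_{2j-1}\partial_{{x \grave{}}_{2j}})$, read off from the definition of $\px$. Since every Clifford generator commutes with all the anti-commuting variables, each ${e \grave{}}_{2j}$, ${e \grave{}}_{2j-1}$ commutes with the kernel $K^{\pm}(x,y)$ and with the Berezin integral, so it can be pulled to the left of $\cF_{0 | 2n}^{\pm}$. Applying the first two lines of Lemma \ref{fermcalculusrules} turns $\partial_{{x \grave{}}_{2j-1}}$ into $\pm\frac{i}{2}{y \grave{}}_{2j}$ and $\partial_{{x \grave{}}_{2j}}$ into $\mp\frac{i}{2}{y \grave{}}_{2j-1}$. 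Once the factor $2$ and the sign attached to the second summand are absorbed, the result collapses to $\pm i\sum_{j=1}^n ({e \grave{}}_{2j}{y \grave{}}_{2j} + {e \grave{}}_{2j-1}{y \grave{}}_{2j-1})\cF_{0 | 2n}^{\pm}(g)$, which is precisely $\pm i \uyb \cF_{0 | 2n}^{\pm}(g)$ once one recognizes $\uyb = \sum_{l=1}^{2n}{y \grave{}}_l {e \grave{}}_l$ and again uses that generators commute with variables. The identity $\cF_{0 | 2n}^{\pm}(\uxb g) = \pm i \upyb \cF_{0 | 2n}^{\pm}(g)$ is the mirror image: writing $\uxb = \sum_{j=1}^n ({x \grave{}}_{2j-1}{e \grave{}}_{2j-1} + {x \grave{}}_{2j}{e \grave{}}_{2j})$ and applying the last two lines of the lemma converts the multiplication operators into $\partial_{{y \grave{}}_{2j}}$ and $\partial_{{y \grave{}}_{2j-1}}$ with the stated $\pm 2i$ factors, and the combination reassembles into $\pm i \cdot 2\sum_{j=1}^n ({e \grave{}}_{2j}\partial_{{y \grave{}}_{2j-1}} - {e \grave{}}_{2j-1}\partial_{{y \grave{}}_{2j}}) = \pm i \upyb$.

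For the right column I would simply iterate. The fermionic Dirac operator squares to the fermionic Laplacian, $\upxb^2 = \Delta_f$ (and likewise $\upyb^2 = \Delta_f$ in the $y$-variables), which one checks from the symplectic relations among the ${e \grave{}}_j$ (the diagonal terms vanish by antisymmetry of the derivatives, and the mixed terms contract via ${e \grave{}}_{2j-1}{e \grave{}}_{2k} - {e \grave{}}_{2k}{e \grave{}}_{2j-1}=\delta_{jk}$). Hence $\cF_{0 | 2n}^{\pm}(\Delta_f g) = \cF_{0 | 2n}^{\pm}(\upxb(\upxb g))$, and two applications of the first identity give $(\pm i)^2 \uyb^2 \cF_{0 | 2n}^{\pm}(g) = -\uyb^2 \cF_{0 | 2n}^{\pm}(g)$. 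In the same way $\cF_{0 | 2n}^{\pm}(\uxb^2 g)$ becomes $(\pm i)^2 \upyb^2 \cF_{0 | 2n}^{\pm}(g) = -\Delta_f \cF_{0 | 2n}^{\pm}(g)$. The only arithmetic input is $(\pm i)^2 = -1$ in both sign choices.

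I do not expect a genuine obstacle, since the analytic content is already carried by Lemma \ref{fermcalculusrules} and the corollary is essentially a repackaging. The one point requiring attention is the sign and index bookkeeping in the two base identities — one must pair $\partial_{{x \grave{}}_{2j-1}}$ with the ${e \grave{}}_{2j}$ summand (and so on) and verify that the alternating $\pm/\mp$ pattern of the lemma collapses into a single overall $\pm i$ multiplying $\uyb$ or $\upyb$. A secondary detail worth stating explicitly is the justification for commuting the generators ${e \grave{}}_j$ through the Berezin integral, which rests on the defining relation that all generators commute with all variables.
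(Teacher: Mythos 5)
Your proposal is correct and takes essentially the same approach as the paper: the paper offers no separate argument, stating only that the corollary follows immediately from Lemma \ref{fermcalculusrules}, and your termwise expansion of $\upxb$ and $\uxb$ in coordinates (with the generators ${e \grave{}}_j$ pulled through the Berezin integral) followed by iteration and $(\pm i)^2 = -1$, using $\upxb^2 = \Delta_f = \upyb^2$, is exactly the computation the paper leaves implicit. Your sign and index bookkeeping checks out against the lemma.
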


Now let us calculate the fermionic Fourier transform of some simple functions:

(i) $\cF_{0 | 2n}^{\pm} (\uxb^{2n}) = n! \cF_{0 | 2n}^{\pm} ({x \grave{}}_{1} \ldots {x \grave{}}_{2n}) = n! 2^n$

(ii) $\cF_{0 | 2n}^{\pm} (\uxb^{2k})$

We first need the following formula
\begin{eqnarray*}
\upxb^{2n-2k} \uxb^{2n} &=& 2n (2n-2-2n) \upxb^{2n-2k-2} \uxb^{2n-2}\\
&=& \ldots\\
&=& 2^{2n-2k} (-1)^{n-k} \frac{n!(n-k!)}{k !} \uxb^{2k}.
\end{eqnarray*}
We then obtain the following result
\begin{eqnarray*}
\cF_{0 | 2n}^{\pm} (\uxb^{2k}) &=& \frac{(-1)^{n-k}}{2^{2n-2k}} \frac{k!}{n! (n-k)!} \cF_{0 | 2n}^{\pm} (\upxb^{2n-2k} \uxb^{2n})\\
&=&  \frac{1}{2^{2n-2k}} \frac{k!}{n! (n-k)!}  \uyb^{2n-2k}\cF_{0 | 2n}^{\pm} ( \uxb^{2n})\\
&=&  2^{2k-n} \frac{k!}{(n-k)!}  \uyb^{2n-2k}.
\end{eqnarray*}

(iii) the fermionic Fourier transform of the Gaussian $\exp(\uxb^2/2)$:
\begin{eqnarray*}
\cF_{0 | 2n}^{\pm} (\exp(\uxb^2/2)) &=& \sum_{j=0}^{n} \cF_{0 | 2n}^{\pm} (\frac{\uxb^{2j}}{2^j j!})\\
&=&\sum_{j=0}^{n} \frac{2^{2j-n}}{2^j j!} \frac{j!}{(n-j)!} \uyb^{2n-2j}\\
&=& \sum_{j=0}^{n} \frac{\uyb^{2n-2j}}{2^{n-j}(n-j)!} \\
&=& \exp(\uyb^2/2).
\end{eqnarray*}
So we conclude that the Gaussian function is invariant under the fermionic Fourier transform (as would be expected).

Now we turn our attention to the inversion of the Fourier transform. We have the following theorem.
\begin{theorem}[inversion]
One has that
\[
\cF_{0 | 2n}^{\pm} \circ \cF_{0 | 2n}^{\mp} = \mbox{id}_{\Lambda_{2n}}.
\]
\end{theorem}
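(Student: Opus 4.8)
The plan is to avoid computing the double Berezin integral directly and instead exploit the intertwining relations of Lemma \ref{fermcalculusrules}. Write $T = \cF_{0 | 2n}^{\pm}\circ\cF_{0 | 2n}^{\mp}$, viewed as an endomorphism of $\Lambda_{2n}$ (identifying the variables of domain and codomain). I claim that two elementary facts force $T=\mathrm{id}$: that $T$ commutes with left multiplication by each Grassmann generator, and that $T$ fixes the constant $1$. Since the monomials ${x \grave{}}_{j_1}\cdots{x \grave{}}_{j_r}$ span $\Lambda_{2n}$ and each is obtained by applying multiplication operators to $1$, these two facts determine $T$ completely.

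First I would establish the intertwining property. Applying the multiplication rule of Lemma \ref{fermcalculusrules} to the inner transform $\cF_{0 | 2n}^{\mp}$ and then the differentiation rule to the outer transform $\cF_{0 | 2n}^{\pm}$, the two scalar prefactors combine as $(\mp 2i)(\pm\tfrac{i}{2}) = -i^2 = 1$. Concretely,
\[
T({x \grave{}}_{2i}\, g) = \cF_{0 | 2n}^{\pm}\bigl(\mp 2i\,\partial_{{y \grave{}}_{2i-1}}\cF_{0 | 2n}^{\mp}(g)\bigr) = (\mp 2i)(\pm\tfrac{i}{2})\,{y \grave{}}_{2i}\,T(g) = {y \grave{}}_{2i}\,T(g),
\]
and the analogous identity holds for ${x \grave{}}_{2i-1}$ with the same cancellation of prefactors. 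Hence $T$ commutes with left multiplication by every generator, and therefore with left multiplication $L_a$ by every $a\in\Lambda_{2n}$, since the generators generate the algebra and $L_{ab}=L_aL_b$.

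Next I would compute $T(1)$. From computation (ii) with $k=0$ we have $\cF_{0 | 2n}^{\mp}(1) = \frac{1}{2^n n!}\uyb^{2n}$, and from computation (i), $\cF_{0 | 2n}^{\pm}(\uyb^{2n}) = 2^n n!$. Thus $T(1) = \frac{1}{2^n n!}\cF_{0 | 2n}^{\pm}(\uyb^{2n}) = 1$. Combining the two facts, for any $a\in\Lambda_{2n}$,
\[
T(a) = T(L_a 1) = L_a T(1) = a,
\]
so $T = \mathrm{id}_{\Lambda_{2n}}$. The argument is symmetric under interchange of $+$ and $-$, so both composition orders $\cF_{0 | 2n}^{+}\circ\cF_{0 | 2n}^{-}$ and $\cF_{0 | 2n}^{-}\circ\cF_{0 | 2n}^{+}$ are covered at once.

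The only step requiring genuine care is the sign bookkeeping in the intertwining computation: one must track the even index $2i$ against the odd index $2i-1$ and check in each case that the product of the prefactor from the multiplication rule and the prefactor from the differentiation rule is exactly $+1$, not $-1$. Everything else is formal algebra in $\Lambda_{2n}$ together with the two explicit transforms already recorded in (i) and (ii), so I expect no further obstacle.
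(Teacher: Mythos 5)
Your proof is correct and is essentially the paper's own argument in operator form: the paper likewise reduces to monomials via Lemma \ref{fermcalculusrules} applied twice (with the same $(\mp 2i)(\pm\tfrac{i}{2})=1$ cancellation) and then evaluates $\cF_{0|2n}^{+}\cF_{0|2n}^{-}(1)$ using computations (i) and (ii). Your repackaging as ``$T$ commutes with every $L_{{x\grave{}}_j}$ and fixes $1$'' merely localizes the sign bookkeeping that the paper tracks globally through the factors $(-1)^{\sum\alpha_{2i}}$ and $(-1)^{\sum\alpha_{2i-1}}$.
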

\begin{proof}
As the Fourier transform is linear, it suffices to give the proof for a monomial $x_A = {x \grave{}}_{1}^{\alpha_1} \ldots {x \grave{}}_{2n}^{\alpha_{2n}}$ with $\alpha_i \in \left\{ 0, 1\right\}$. We then find, using lemma \ref{fermcalculusrules}:
\begin{eqnarray*}
\cF_{0 | 2n}^{+} \cF_{0 | 2n}^{-} (x_A) &=& (2i)^{|A|} (-1)^{\sum \alpha_{2i}}\cF_{0 | 2n}^{+} \left( \partial_{{y \grave{}}_{2}}^{\alpha_{1}} \partial_{{y \grave{}}_{1}}^{\alpha_{2}} \ldots \partial_{{y \grave{}}_{2n}}^{\alpha_{2n-1}} \partial_{{y \grave{}}_{2n-1}}^{\alpha_{2n}}  \cF_{0 | 2n}^{-} (1)\right)\\
&=& (2i)^{|A|} (-1)^{\sum \alpha_{2i}} (\frac{i}{2})^{|A|}  (-1)^{\sum \alpha_{2i-1}} {x \grave{}}_{1}^{\alpha_1} \ldots {x \grave{}}_{2n}^{\alpha_{2n}}\cF_{0 | 2n}^{+} \cF_{0 | 2n}^{-} (1)\\
&=& x_A \cF_{0 | 2n}^{+} (\frac{2^{-n}}{n!} \uyb^{2n})\\
&=& x_A.
\end{eqnarray*}
Similarly we find that $\cF_{0 | 2n}^{-} \cF_{0 | 2n}^{+} (x_A) = x_A$.
\end{proof}

As a consequence we immediately obtain that $\cF_{0 | 2n}^{\pm} $ is an isomorphism of $\Lambda_{2n}$.

Now we take the first important step in the construction of an eigenbasis of the Fourier transform. 
\begin{theorem}
Let $H_l(x) \in \cH_l^f$ be a spherical harmonic of degree $l$. Then one has
\[
\cF_{0 | 2n}^{\pm} (H_l(x) \exp(\uxb^2/2))(y) = (\pm i)^l H_l(y) \exp(\uyb^2/2).
\]
\label{fermharmfour}
\end{theorem}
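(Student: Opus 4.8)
\section*{Proof proposal}

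The plan is to mirror the classical proof of Hecke's theorem (the Fourier transform of a solid harmonic times a Gaussian), letting multiplication and differentiation exchange roles as dictated by Lemma \ref{fermcalculusrules}. I will first convert that lemma into a substitution principle, then reduce to the Gaussian, and finally use harmonicity to kill the correction terms.

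First I would read the last two relations of Lemma \ref{fermcalculusrules} as saying that left multiplication by ${x \grave{}}_j$ is intertwined by $\cF_{0|2n}^{\pm}$ with a first--order operator $T_j$ in the $y$--variables, namely $T_{2i} = \pm 2i\,\partial_{{y \grave{}}_{2i-1}}$ and $T_{2i-1} = \mp 2i\,\partial_{{y \grave{}}_{2i}}$. Since the $T_j$ are scalar multiples of Grassmann partials, they anticommute and square to zero exactly as the ${x \grave{}}_j$ do, so iterating $\cF_{0|2n}^{\pm}({x \grave{}}_j h) = T_j\,\cF_{0|2n}^{\pm}(h)$ over the factors of each monomial (and summing) yields, for the scalar harmonic $H_l \in \cH_l^f$,
\[
\cF_{0|2n}^{\pm}(H_l\,g) = H_l(T)\,\cF_{0|2n}^{\pm}(g),
\]
where $H_l(T)$ means: replace each ${x \grave{}}_j$ by $T_j$, keeping the order. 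Choosing $g = \exp(\uxb^2/2)$ and using that the Gaussian is fixed by the transform (computation (iii) above) reduces the whole statement to evaluating $H_l(T)\exp(\uyb^2/2)$.

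Next I would normal--order $H_l(T)$ against the Gaussian. Conjugation gives $e^{-\uyb^2/2}\partial_{{y \grave{}}_k}e^{\uyb^2/2} = (\partial_{{y \grave{}}_k}\uyb^2/2) + \partial_{{y \grave{}}_k}$, and with $\partial_{{y \grave{}}_{2i-1}}(\uyb^2/2) = \tfrac12{y \grave{}}_{2i}$, $\partial_{{y \grave{}}_{2i}}(\uyb^2/2) = -\tfrac12{y \grave{}}_{2i-1}$ each conjugated operator splits as $e^{-\uyb^2/2}T_j e^{\uyb^2/2} = P_j + Q_j$, where $P_j = \pm i\,{y \grave{}}_j$ is multiplication (uniformly in $j$) and $Q_j$ is twice the partner derivative. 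Hence $H_l(T)\exp(\uyb^2/2) = \exp(\uyb^2/2)\,H_l(P+Q)\cdot 1$. In the Wick--type expansion of $H_l(P+Q)\cdot 1$ every uncontracted factor $Q_j$ annihilates the constant $1$, so the surviving contributions are the purely multiplicative term, which by homogeneity of degree $l$ equals $(\pm i)^l H_l(y)$, together with terms carrying one or more contractions $\{Q_j,P_k\}$.

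The main obstacle is precisely this last combinatorial step: the anticommutator $\{Q_j,P_k\}$ is a constant proportional to the symplectic pairing, and I must check that the sum of all singly--contracted terms reproduces $\Delta_f = 4\sum_j \partial_{{x \grave{}}_{2j-1}}\partial_{{x \grave{}}_{2j}}$ acting on $H_l$ with the correct signs and constants (and multiply--contracted terms reproduce $\Delta_f^r H_l$). Granting this bookkeeping, harmonicity $\Delta_f H_l = 0$ forces every correction term to vanish, leaving only the multiplicative term; since $\exp(\uyb^2/2)$ is built from the central element $\uyb^2$ it commutes through, and we obtain
\[
\cF_{0|2n}^{\pm}(H_l\exp(\uxb^2/2)) = (\pm i)^l H_l(y)\exp(\uyb^2/2).
\]
As a sanity check one can run the argument on $H_0 = 1$, recovering the invariance of the Gaussian, and on a degree--one harmonic, where no contraction is possible and the single factor $\pm i$ emerges transparently.
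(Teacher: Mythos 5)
Your proposal is correct, but it takes a genuinely different route from the paper's. The paper proves Theorem \ref{fermharmfour} by brute force through the Berezin integral: it uses the expansion $\int_{B,x} \exp(\uxb^2/2)\, R = \sum_{k} \frac{(-1)^k (2\pi)^{-n}}{2^k k!} (\Delta_f^k R)(0)$, expands the kernel $K^{\pm}(x,y)$ in powers of $\langle \uxb,\uyb\rangle$, and then invokes a Funk--Hecke-type evaluation of $\Delta_f^s\left(\langle \uxb,\uyb\rangle^k H_l\right)$ with explicit coefficients $\alpha_l^*(t^k)$, finishing with a Gamma-function summation. You instead run the operational Hecke-style argument: iterate Lemma \ref{fermcalculusrules} to get $\cF_{0|2n}^{\pm}(H_l\, g) = H_l(T)\,\cF_{0|2n}^{\pm}(g)$ (legitimate, since the $T_j$ satisfy the same Grassmann relations as the ${x \grave{}}_j$, so the substitution is well defined), reduce to the Gaussian, conjugate, and kill the corrections by harmonicity. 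The bookkeeping you postpone does check out, and it is worth recording why: the only nonvanishing contractions are $\{Q_{2i}, P_{2i-1}\} = -2$ and $\{Q_{2i-1}, P_{2i}\} = +2$, an antisymmetric pairing supported exactly on symplectic partners, so the contraction operator is a constant multiple $\lambda \Delta_f$ of the fermionic Laplacian (testing on ${x \grave{}}_{1}{x \grave{}}_{2}$ gives $\lambda = -\tfrac{1}{2}$), and the $r$-fold contracted terms assemble into $\frac{\lambda^r}{r!}\, H_l^{(r)}$ with $H_l^{(r)}$ built from $\Delta_f^r H_l$, all of which vanish when $\Delta_f H_l = 0$; the uncontracted term is $H_l(P)\cdot 1 = (\pm i)^l H_l(y)$ by homogeneity, since $P_j = \pm i\, {y \grave{}}_{j}$ uniformly in $j$. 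Note that your central identity $H_l(T)\exp(\uyb^2/2) = (\pm i)^l H_l(y)\exp(\uyb^2/2)$ is, up to the factor $(\pm i)^l$, precisely the fermionic half of Corollary \ref{CorDiffHarm} --- which the paper derives as a \emph{consequence} of the Fourier theorem, whereas you prove it directly; so there is no circularity, and your route is arguably more elementary, avoiding the special-function machinery entirely. What the paper's heavier computation buys is the explicit Funk--Hecke coefficients themselves, which are of independent use, while your argument generalizes more transparently (e.g.\ it shows immediately what happens for non-harmonic homogeneous polynomials, where the $\Delta_f^r H_l$ corrections survive, in line with the Lemma following Corollary \ref{CorDiffHarm}).
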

\begin{proof}
As we have that (see \cite{DBS5}, theorem 10)
\[
\int_{B,x} \exp(\uxb^2/2) R = \sum_{k=0}^{n} \frac{(-1)^k (2 \pi)^{-n}}{2^k k!} (\Delta_f^k R)(0)
\]
with $R \in \Lambda_{2n}$, we find that
\begin{eqnarray*}
\cF_{0 | 2n}^{\pm} (H_l(x) \exp(\uxb^2/2)) &=& \sum_{k=0}^{n} \frac{(-1)^k }{2^k k!} \Delta_f^k \left(K^{\pm}(x,y) H_l(x) \right)(0)\\
&=&\sum_{k=0}^{n} \sum_{j=0}^{2n} \frac{(-1)^k (\pm i)^j}{2^k k! j!} \Delta_f^k \left( (-1)^j \langle \uxb,\uyb \rangle^j  H_l(x) \right)(0)\\
&=&\sum_{k \geq l/2}^{n}  \frac{(-1)^k (\pm i)^{2k-l}}{2^k k! (2k-l)!} \Delta_f^k \left( (-1)^{2k-l} \langle \uxb,\uyb \rangle^{2k-l}  H_l(x) \right) \\
\end{eqnarray*}
with $\langle \uxb,\uyb \rangle = \sum_{j=1}^{n} ({x \grave{}}_{2j-1}{y \grave{}}_{2j} - {x \grave{}}_{2j} {y \grave{}}_{2j-1})/2$.

In a similar way as in \cite{DBS5}, theorem 7 (Funk-Hecke), it is proven that the following relation holds:
\[
\Delta^s_f (-1)^k \langle \uxb,\uyb \rangle^k H_l(x) =  \frac{2^{2s} s! n !}{ (n-s)!} \alpha_l^*(t^k) H_l(y) \uyb^{k-l} (-1)^{(k-l)/2}
\]
with $k+l = 2s \leq 2n$ and with
\begin{eqnarray*}
\alpha_l^*(t^k) &=& \frac{k! (n-\frac{k+l}{2})!}{(k-l)! n!} \frac{ \pi^{-\frac{1}{2}} (-1)^{\frac{k+l}{2}}}{2^l} \Gamma(\frac{k-l+1}{2})
  \quad \mbox{if $k+l$ even, $k \geq l$}\\
&=&0 \quad \mbox{if $k+l$ odd}\\
&=&0 \quad \mbox{if $k<l$}.\\
\end{eqnarray*}
Now
\begin{eqnarray*}
&&\sum_{k \geq l/2}^{n}  \frac{(-1)^k (\pm i)^{2k-l}}{2^k k! (2k-l)!} \Delta_f^k \left( (-1)^{2k-l} \langle \uxb,\uyb \rangle^{2k-l}  H_l(x) \right)\\
&=& \sum_{p=0}^{n-l}  \frac{(-1)^{l+p} (\pm i)^{l+2p}}{2^{l+p} (l+p)! (l+2p)!} \Delta_f^{l+p} \left( (-1)^{l+2p} \langle \uxb,\uyb \rangle^{l+2p}  H_l(x) \right)\\
&=& \sum_{p=0}^{n-l}  \frac{(-1)^{l+p} (\pm i)^{l+2p}}{2^{l+p} (l+p)! (l+2p)!} \frac{2^{2l+2p} (l+p)! n !}{ (n-l-p)!}  \alpha_l^*(t^{l+2p})  H_l(y) \uyb^{2p} (-1)^{p}\\
&=& \sum_{p=0}^{n-l}  \frac{(-1)^{l+p} (\pm i)^{l+2p}}{  (l+2p)!} \frac{2^{l+p}  n !}{ (n-l-p)!} \frac{(l+2p)! (n-l-p)!}{(2p)! n!} \frac{ \pi^{-\frac{1}{2}} (-1)^{l+p}}{2^l} \Gamma(\frac{2p+1}{2})  H_l(y) \uyb^{2p} (-1)^{p}\\
&=& (\pm i)^{l}\sum_{p=0}^{n-l} \frac{\uyb^{2p}}{2^p p!}   H_l(y)  \\
&=& (\pm i)^{l}\sum_{p=0}^{n} \frac{\uyb^{2p}}{2^p p!}   H_l(y)  \\
&=&(\pm i)^{l} H_l(y) \exp(\uyb^2/2)
\end{eqnarray*}
where we have used the fact that $\uyb^{2k} H_l(y) =0$ if $k>n-l$.
\end{proof}

As a consequence, we obtain the following theorem, which completely characterizes the fermionic Fourier transform because of the following decomposition (see \cite{DBE1})
\begin{equation}
\Lambda_{2n} = \bigoplus_{k=0}^{n} \left(\bigoplus_{j=0}^{n-k} \uxb^{2j} \cH_k^f \right).
\label{Fischer}
\end{equation}

\begin{theorem}
One has that
\[
\cF_{0 | 2n}^{\pm} (\uxb^{2k} \cH_l^f) = (\pm i)^l \frac{2^{2k+l-n} k!}{(n-k-l)!} \uyb^{2n-2k-2l} \cH_l^f
\]
with $l \leq n$ and $k \leq n-l$.
\end{theorem}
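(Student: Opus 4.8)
The plan is to read off the formula directly from Theorem \ref{fermharmfour} by decomposing the identity stated there into homogeneous components of fixed degree in the anticommuting variables; no new integration needs to be performed. The structural fact I would record first is that $\cF_{0 | 2n}^{\pm}$ reverses degrees: if $g \in \Lambda_{2n}$ is homogeneous of degree $d$ in the $\uxb$ variables, then $\cF_{0 | 2n}^{\pm}(g)$ is homogeneous of degree $2n-d$ in the $\uyb$ variables. This is immediate from the definition, since $K^{\pm}(x,y) = \exp(\mp i \langle \uxb,\uyb\rangle)$ expands as a sum $\sum_j \frac{(\mp i)^j}{j!}\langle \uxb,\uyb\rangle^j$ of terms carrying equal degree $j$ in $\uxb$ and in $\uyb$ (because $\langle \uxb,\uyb\rangle$ is bilinear), while $\int_{B,x}$ retains only the component of total $\uxb$-degree $2n$. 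Hence in $K^{\pm}(x,y)\,g$ only the kernel term of $\uxb$-degree $2n-d$ survives, and it contributes $\uyb$-degree $2n-d$.

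Next I would fix $H_l \in \cH_l^f$ and expand both sides of Theorem \ref{fermharmfour}. Using $\exp(\uxb^2/2) = \sum_{j\ge 0} \uxb^{2j}/(2^j j!)$ together with the vanishing $\uxb^{2j}H_l = 0$ for $j > n-l$ (which is built into the Fischer decomposition (\ref{Fischer})), the left-hand side becomes
\[
\cF_{0 | 2n}^{\pm}\!\left(\sum_{j=0}^{n-l} \frac{\uxb^{2j}}{2^j j!}\, H_l\right) = \sum_{j=0}^{n-l} \frac{1}{2^j j!}\,\cF_{0 | 2n}^{\pm}(\uxb^{2j}H_l),
\]
whereas the right-hand side equals $(\pm i)^l \sum_{j'=0}^{n-l} \uyb^{2j'}H_l/(2^{j'} j'!)$. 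By the degree-reversal property the summand $\cF_{0 | 2n}^{\pm}(\uxb^{2j}H_l)$ is homogeneous of degree $2n-l-2j$, while $\uyb^{2j'}H_l$ is homogeneous of degree $l+2j'$; as $j$ runs over $0,\dots,n-l$ these two lists of degrees coincide under the matching $j' = n-l-j$.

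I would then equate the homogeneous components of degree $2n-l-2k$ on both sides. This isolates a single term on each side and yields
\[
\frac{1}{2^k k!}\,\cF_{0 | 2n}^{\pm}(\uxb^{2k}H_l) = (\pm i)^l \frac{1}{2^{n-l-k}(n-l-k)!}\,\uyb^{2(n-l-k)}H_l,
\]
and solving for $\cF_{0 | 2n}^{\pm}(\uxb^{2k}H_l)$ gives exactly $(\pm i)^l \frac{2^{2k+l-n} k!}{(n-k-l)!}\uyb^{2n-2k-2l}H_l$. Since $H_l$ was an arbitrary element of $\cH_l^f$, this is the claimed identity on $\uxb^{2k}\cH_l^f$.

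The only genuinely delicate point is the justification of the term-by-term matching, namely that each $\cF_{0 | 2n}^{\pm}(\uxb^{2j}H_l)$ really lives in a single homogeneous degree; this is precisely what the degree-reversal observation supplies, after which everything is bookkeeping. As a consistency check one recovers the scalar case $l=0$, $H_0=1$ and compares with the explicit computation (ii) above, and one could alternatively derive the same result by iterating the corollary relation $\cF_{0 | 2n}^{\pm}(\uxb^2 g) = -\Delta_f\,\cF_{0 | 2n}^{\pm}(g)$ from the $k=0$ case and evaluating $\Delta_f^{\,k}(\uyb^{2(n-l)}H_l)$ by means of the calculational Lemma 1 specialized to $m=0$, $M=-2n$.
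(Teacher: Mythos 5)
Your proof is correct and takes essentially the same route as the paper: the paper's entire proof is the remark that the claim follows from Theorem \ref{fermharmfour} together with the fact that $\cF_{0|2n}^{\pm}$ maps $d$-homogeneous elements of $\Lambda_{2n}$ to $(2n-d)$-homogeneous ones, which is exactly the degree-reversal-plus-component-matching argument you carry out. You merely make explicit the bookkeeping the paper leaves implicit (the Gaussian expansion, the matching $j'=n-l-k$, and the power of $2$), and your justification of the degree-reversal property from the kernel expansion is sound.
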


\begin{proof}
Immediately by using theorem \ref{fermharmfour} and noting that $\cF_{0 | 2n}^{\pm}$ maps $k$-homogeneous elements of $\Lambda_{2n}$ to $(2n-k)$-homogeneous elements.
\end{proof}

We also have a Parseval theorem for the fermionic Fourier transform.
\begin{theorem}[Parseval]
Let $f, g \in \Lambda_{2n}$. Then one has:
\[
\int_{B,x} f(x) \overline{g(x)} = \int_{B,y} \cF_{0 | 2n}^{\pm} (f)(y) \overline{\cF_{0 | 2n}^{\pm} (g)(y)}
\]
where the bar denotes the standard complex conjugation.
\label{fermparseval}
\end{theorem}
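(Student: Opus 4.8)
The plan is to avoid manipulating monomials directly and instead reduce the statement to the already-proven inversion theorem $\cF_{0|2n}^{\pm}\circ\cF_{0|2n}^{\mp}=\mathrm{id}_{\Lambda_{2n}}$, using two elementary ingredients: an \emph{exchange formula} expressing that the fermionic Fourier transform is symmetric with respect to the Berezin bilinear pairing, and a \emph{conjugation rule} relating $\overline{\cF_{0|2n}^{\pm}}$ to $\cF_{0|2n}^{\mp}$. Both follow from the symmetry and the evenness of the kernel, together with the fact (stressed in the paper) that the Berezin integral is an even operation since it always involves the even number $2n$ of derivatives.

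First I would prove the exchange formula
\[
\int_{B,x}\cF_{0|2n}^{\pm}(f)(x)\,\phi(x)=\int_{B,x}f(x)\,\cF_{0|2n}^{\pm}(\phi)(x),\qquad f,\phi\in\Lambda_{2n}.
\]
Writing the left-hand side as the iterated Berezin integral $(2\pi)^n\int_{B,x}\int_{B,y}K^{\pm}(y,x)f(y)\phi(x)$, the only facts required are that the kernel is symmetric, $K^{\pm}(x,y)=K^{\pm}(y,x)$ (see the Remark), and that it is an even element of $\cP$, being the exponential of the even quantity $\langle\uxb,\uyb\rangle$. Evenness lets $K^{\pm}$ commute with $f(y)$ and lets the two Berezin integrations over the disjoint sets of variables be interchanged without any sign; relabelling then produces the right-hand side.

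Second I would record the conjugation rule $\overline{\cF_{0|2n}^{\pm}(g)}=\cF_{0|2n}^{\mp}(\overline{g})$. Since the prefactor $(2\pi)^n$ and the Berezin operator are real and the kernel satisfies $\overline{K^{\pm}}=K^{\mp}$ (conjugation merely flips the sign of the $i$ in the real exponent), the bar passes through the integral and converts $K^{\pm}$ into $K^{\mp}$, while evenness again guarantees that no ordering signs are generated when commuting $\overline{K^{\mp}}$ past $\overline{g}$. Granting these two facts, the theorem follows in one line: applying the conjugation rule and then the exchange formula with $\phi=\cF_{0|2n}^{\mp}(\overline{g})$ gives
\[
\int_{B,y}\cF_{0|2n}^{\pm}(f)\,\overline{\cF_{0|2n}^{\pm}(g)}=\int_{B,y}\cF_{0|2n}^{\pm}(f)\,\cF_{0|2n}^{\mp}(\overline{g})=\int_{B,x}f\,\cF_{0|2n}^{\pm}\cF_{0|2n}^{\mp}(\overline{g})=\int_{B,x}f\,\overline{g},
\]
the last equality being the inversion theorem.

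The step I expect to be the main obstacle is the conjugation rule: one must fix the reality convention for the anticommuting variables and verify that complex conjugation is compatible with the Berezin integral, in particular that any order-reversal built into the conjugation of a Grassmann monomial is harmless here — which it is, precisely because the kernel is even. Should this bookkeeping prove delicate, a fully computational fallback is available: substitute both transforms directly, carry out the $y$-integration first using $\int_{B,y}K^{\pm}(x,y)K^{\mp}(u,y)=(2\pi)^{-n}(\uxb-\uub)^{2n}/(2^n n!)$, and recognise the resulting kernel as the fermionic part $\frac{\pi^{n}}{n!}(\uxb-\uub)^{2n}$ of the Dirac distribution $\delta(u-x)$, which reproduces $\overline{g(x)}$ upon integrating over $u$; this again collapses the expression to $\int_{B,x}f\,\overline{g}$.
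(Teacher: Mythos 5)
Your proposal is correct, and your primary argument takes a genuinely different route from the paper's. The paper proves Parseval by direct computation: it substitutes both transforms, interchanges the Berezin integrations, evaluates the inner integral $(2\pi)^n\int_{B,y}e^{\mp i\langle \uyb,\uub-\uvb\rangle}=\frac{2^{-n}}{n!}(\uub-\uvb)^{2n}$, and recognizes the fermionic delta $\delta(\uub-\uvb)$ --- this is exactly your ``computational fallback,'' down to the normalization (your $(2\pi)^{-n}(\uxb-\uub)^{2n}/(2^n n!)$ agrees with the paper's constants). Your main route instead derives Parseval from the exchange (multiplication) formula $\int_{B,x}\cF^{\pm}_{0|2n}(f)\,\phi=\int_{B,x}f\,\cF^{\pm}_{0|2n}(\phi)$, the conjugation rule $\overline{\cF^{\pm}_{0|2n}(g)}=\cF^{\mp}_{0|2n}(\overline{g})$, and the already-proved inversion theorem; the sign bookkeeping you invoke is sound (interchanging $\int_{B,u}$ and $\int_{B,v}$ costs $(-1)^{(2n)(2n)}=1$, and the even kernel commutes with all factors), so the one-line chain closes correctly. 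What your route buys: it avoids recomputing the delta, isolates precisely which structural facts carry the theorem (symmetry and evenness of the kernel, evenness of the Berezin integral), and is the standard template that would yield Parseval for the full transform $\cF^{\pm}_{m|2n}$ in one stroke rather than via the bosonic--fermionic splitting. What the paper's route buys: it is self-contained at the kernel level and needs no auxiliary lemma. Concerning the obstacle you flagged: the conjugation rule is unproblematic here because the paper's own proof tacitly uses the same convention --- ``standard complex conjugation'' fixes the real generators ${x\grave{}}_j$ and conjugates only the complex coefficients (the paper turns $e^{\pm i\langle\uyb,\uvb\rangle}$ into $e^{\mp i\langle\uyb,\uvb\rangle}$ with no order reversal), making conjugation an algebra automorphism commuting with Berezin differentiation. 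Your caution was nonetheless warranted: under an order-reversing (antiautomorphism) convention, degree-dependent signs $(-1)^{d(d-1)/2}$ would appear, and since $\cF^{\pm}_{0|2n}$ maps degree $k$ to degree $2n-k$, the rule $\overline{\cF^{\pm}_{0|2n}(g)}=\cF^{\mp}_{0|2n}(\overline{g})$ would acquire extra signs --- so keeping the computational fallback in reserve was the right instinct.
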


\begin{proof}
The right-hand side is calculated as follows:
\begin{eqnarray*}
\int_{B,y} \cF_{0 | 2n}^{\pm} (f)(y) \overline{\cF_{0 | 2n}^{\pm} (g)(y)} &=& (2 \pi)^{2n} \int_{B,y} \int_{B,u} \int_{B,v} e^{\mp i \langle \uyb, \uub \rangle} e^{\pm i \langle \uyb, \uvb \rangle} f(u) \overline{g(v)}\\
&=& (2 \pi)^{n} \int_{B,u} \int_{B,v} \left[(2 \pi)^{n}  \int_{B,y}  e^{\mp i \langle \uyb, \uub -\uvb\rangle} \right] f(u) \overline{g(v)}\\
&=& (2 \pi)^{n} \int_{B,u} \int_{B,v} \frac{2^{-n}}{n!} (\uub-\uvb )^{2n} f(u) \overline{g(v)}\\
&=& \int_{B,u} \int_{B,v} \delta(\uub-\uvb) f(u) \overline{g(v)}\\
&=&\int_{B,u} f(u) \overline{g(u)}.
\end{eqnarray*}
\end{proof}

\section{General Fourier transform}
\label{gentransform}

Now we define a Fourier transform on the whole superspace by
\begin{eqnarray*}
\cF_{m | 2n}^{\pm} &=& (2 \pi)^{-M/2} \int_{\mR^m} d V(\ux) \int_{B,x} \exp{(\mp i \langle x , y \rangle)}\\
&=&(2 \pi)^{-M/2} \int_{\mR^m} d V(\ux) \int_{B,x} \exp{\left(\pm i \left(\sum_{i=1}^{m} x_i y_i - \frac{1}{2} \sum_{j=1}^{n}({x \grave{}}_{2j-1}{y \grave{}}_{2j} - {x \grave{}}_{2j} {y \grave{}}_{2j-1}) \right) \right)}.
\end{eqnarray*}

We immediately have that
\begin{equation}
\cF_{m | 2n}^{\pm} = \cF_{m | 0}^{\pm} \circ \cF_{0 | 2n}^{\pm} = \cF_{0 | 2n}^{\pm} \circ \cF_{m | 0}^{\pm} 
\label{splitting}
\end{equation}
which allows us to use the results of the previous section to study the general Fourier transform.

\begin{theorem}[Inversion]
One has that
\[
\cF_{m | 2n}^{\pm} \circ \cF_{m | 2n}^{\mp} = \mbox{id}_{\cS(\mR^m)_{m|2n}}
\]
and consequently that $\cF_{m | 2n}^{\pm}$ is an isomorphism of $\cS(\mR^m)_{m|2n}$.
\end{theorem}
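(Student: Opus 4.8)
The plan is to reduce the claim to the two inversion results already established, by exploiting the factorization (\ref{splitting}). The bosonic transform $\cF_{m | 0}^{\pm}$ acts only on the commuting variables $x_i$ (via ordinary integration over $\mR^m$), while the fermionic transform $\cF_{0 | 2n}^{\pm}$ acts only on the anti-commuting variables ${x \grave{}}_j$ (via the Berezin integral). Because these integrations are taken over entirely disjoint sets of variables, the two partial transforms commute; this is exactly the two-sided equality recorded in (\ref{splitting}). The normalizing constants are consistent as well, since $(2\pi)^{-m/2}(2\pi)^{n} = (2\pi)^{-(m-2n)/2} = (2\pi)^{-M/2}$, so no bookkeeping of prefactors is needed beyond what (\ref{splitting}) already guarantees.

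First I would write both halves of the composition in the bosonic--fermionic order:
\[
\cF_{m | 2n}^{\pm} \circ \cF_{m | 2n}^{\mp} = \left(\cF_{m | 0}^{\pm} \circ \cF_{0 | 2n}^{\pm}\right) \circ \left(\cF_{m | 0}^{\mp} \circ \cF_{0 | 2n}^{\mp}\right).
\]
Using that $\cF_{0 | 2n}^{\pm}$ and $\cF_{m | 0}^{\mp}$ operate on independent variables and hence commute (again by (\ref{splitting})), I would move the two middle factors past each other to group the bosonic and fermionic pieces:
\[
\cF_{m | 2n}^{\pm} \circ \cF_{m | 2n}^{\mp} = \left(\cF_{m | 0}^{\pm} \circ \cF_{m | 0}^{\mp}\right) \circ \left(\cF_{0 | 2n}^{\pm} \circ \cF_{0 | 2n}^{\mp}\right).
\]
Now the fermionic bracket equals $\mbox{id}_{\Lambda_{2n}}$ by the fermionic inversion theorem proved above, and the bosonic bracket equals $\mbox{id}_{\cS(\mR^m)}$ by the classical Fourier inversion theorem for the symmetrically normalized transform $\cF_{m | 0}^{\pm}$ (see e.g.\ \cite{MR0304972}). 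Since the Clifford factor $\cC$ in $\cS(\mR^m)_{m|2n} = \cS(\mR^m) \otimes \Lambda_{2n} \otimes \cC$ is untouched by either transform and both transforms are $\cC$-linear (the generators commute with all variables), the composition is the identity on the full space $\cS(\mR^m)_{m|2n}$. The isomorphism statement then follows immediately, as $\cF_{m | 2n}^{\mp}$ furnishes a two-sided inverse.

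The only step requiring genuine justification is the commutativity used to regroup the factors, i.e.\ that the bosonic and fermionic partial transforms may be freely interchanged inside the composition. This is precisely the two-sided equality in (\ref{splitting}), and it rests on the fact that the exponent $\mp i \langle x, y \rangle$ splits additively into a purely bosonic and a purely fermionic part whose terms commute, so that the kernel factors as a product and the Lebesgue and Berezin integrals can be performed in either order without any sign arising from the anti-commuting variables. Granting this, the remainder of the argument is purely formal, and the main work of the proof has in fact already been done in the bosonic and fermionic inversion theorems.
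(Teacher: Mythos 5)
Your proposal is correct and follows essentially the same route as the paper: the paper's proof writes $\cF_{m|2n}^{+}\circ\cF_{m|2n}^{-} = \cF_{m|0}^{+}\circ\cF_{0|2n}^{+}\circ\cF_{0|2n}^{-}\circ\cF_{m|0}^{-}$ directly (using the two-sided factorization (\ref{splitting}) in opposite orders for the two factors, which is exactly your commutation step) and then applies the fermionic and classical bosonic inversion theorems. Your additional remarks on the prefactor bookkeeping and $\cC$-linearity merely spell out details the paper leaves implicit.
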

\begin{proof}
We have the following calculation
\begin{eqnarray*}
\cF_{m | 2n}^{+} \circ \cF_{m | 2n}^{-}  &=& \cF_{m | 0}^{+} \circ \cF_{0 | 2n}^{+} \circ \cF_{0 | 2n}^{-} \circ \cF_{m | 0}^{-}\\
&=& \cF_{m | 0}^{+} \circ \mbox{id}_{\Lambda_{2n}} \circ \cF_{m | 0}^{-}\\
&=& \mbox{id}_{\cS(\mR^m)_{m|2n}}.
\end{eqnarray*}
\end{proof}

The action of derivatives and variables is summarized in the following lemma. 
\begin{lemma}
If $g \in \cS(\mR^m)_{m|2n}$, then the following relations hold:
\[
\begin{array}{lllllll}
\cF_{m| 2n}^{\pm} (\partial_{x_i} g) & = & \mp i y_i \cF_{m | 2n}^{\pm} ( g)& \;&\cF_{m| 2n}^{\pm} (x_i g) & = & \mp i \partial_{y_i} \cF_{m | 2n}^{\pm} ( g)\\
\vspace{-3mm}\\
\cF_{m| 2n}^{\pm} (\partial_{{x\grave{}}_{2i}} g) & = & \mp \frac{i}{2} {y \grave{}}_{2i-1} \cF_{m | 2n}^{\pm} ( g)
&\;&\cF_{m | 2n}^{\pm} ({x\grave{}}_{2i} g) & = & \pm 2i \; \partial_{{y\grave{}}_{2i-1}} \cF_{m | 2n}^{\pm} ( g)\\
\vspace{-3mm}\\
\cF_{m | 2n}^{\pm} (\partial_{{x\grave{}}_{2i-1}} g) & = & \pm \frac{i}{2} {y \grave{}}_{2i} \cF_{m | 2n}^{\pm} ( g)&\;& \cF_{m| 2n}^{\pm} ({x\grave{}}_{2i-1} g) & = & \mp 2i \; \partial_{{y\grave{}}_{2i}} \cF_{m | 2n}^{\pm} ( g).
\end{array}
\]
The action of the Dirac operator and the vector variable is given by
\[
\begin{array}{lllllll}
\cF_{m | 2n}^{\pm} (\px  g) & = & \pm i y  \cF_{m | 2n}^{\pm} ( g)&\;& \cF_{m | 2n}^{\pm} (\Delta  g) & = & - y^2  \cF_{m | 2n}^{\pm} ( g)\\
\vspace{-3mm}\\
\cF_{m | 2n}^{\pm} (x g) & = & \pm i \py \cF_{m | 2n}^{\pm} ( g)&\;&\cF_{m | 2n}^{\pm} (x^2 g) & = & - \py^2 \cF_{m | 2n}^{\pm} ( g).
\vspace{-3mm}
\end{array}
\]
\label{superfouriercalculus}
\end{lemma}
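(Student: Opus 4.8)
The plan is to reduce every statement to the factorisation (\ref{splitting}) together with the rules already established for the two one-sided transforms. First I would settle the six componentwise identities in the top block. The two bosonic ones, for $\partial_{x_i}$ and $x_i$, are the classical relations: using $\cF_{m | 2n}^{\pm} = \cF_{0 | 2n}^{\pm} \circ \cF_{m | 0}^{\pm}$ and observing that the operators $\partial_{x_i}$, $x_i$ as well as the output factors $y_i$, $\partial_{y_i}$ all commute with the purely fermionic transform $\cF_{0 | 2n}^{\pm}$, they reduce to integration by parts and to differentiation under the integral sign for $\cF_{m | 0}^{\pm}$. The four fermionic ones are nothing but Lemma \ref{fermcalculusrules}: using instead $\cF_{m | 2n}^{\pm} = \cF_{m | 0}^{\pm} \circ \cF_{0 | 2n}^{\pm}$ and observing that each operator $\partial_{{x \grave{}}_{i}}$, ${x \grave{}}_{i}$ and each output factor $\partial_{{y \grave{}}_{i}}$, ${y \grave{}}_{i}$ commutes with the bosonic transform $\cF_{m | 0}^{\pm}$, they transfer verbatim from that lemma.

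The identities for $\px$ and $x$ are then assembled from these. I would expand $\px = \upxb - \upx$ into its $m + 2n$ scalar terms, pull every Clifford generator $e_j$, ${e \grave{}}_j$ through the transform (legitimate since the kernel $\exp(\mp i \langle x, y \rangle)$ is scalar and all generators commute with the variables and hence with the Berezin integral), and insert the four derivative rules. The fermionic terms regroup into $\pm i \uyb$ and the bosonic terms into $\pm i \uy$, whose sum is $\pm i y$. The analogous computation for $x = \ux + \uxb$ yields $\mp i \upy$ from $\ux$ and $\pm i \upyb$ from $\uxb$, which combine to $\pm i (\upyb - \upy) = \pm i \py$.

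The second-order rules follow by iteration. Applying the first-order Dirac rule twice gives $\cF_{m | 2n}^{\pm} (\Delta g) = \pm i y\,(\pm i y\, \cF_{m | 2n}^{\pm} (g))$; since $y^2 = \uy^2 + \uyb^2$ is scalar and $(\pm i)^2 = -1$, this collapses to $-y^2 \cF_{m | 2n}^{\pm} (g)$, and in the same way $\cF_{m | 2n}^{\pm} (x^2 g) = -\py^2 \cF_{m | 2n}^{\pm} (g)$.

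I expect the only real bookkeeping hurdle to be the middle step: one must keep the four $\pm/\mp$ sign patterns consistent across the derivative rules and check that the symplectically paired terms of $\upxb$ reassemble exactly into $\uyb$ (and, dually, that the vector-variable terms reassemble into the $y$-Dirac operator $\upyb - \upy$ with the correct relative sign). The cleanness of the final step rests on $y^2$ and $\py^2$ being scalar, so that no ordering ambiguity enters when the two first-order applications are composed; it is worth recording this explicitly.
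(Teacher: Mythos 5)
Your proposal is correct and follows exactly the paper's route: the paper's proof is the one-line remark ``Immediate, using formula (\ref{splitting}), lemma \ref{fermcalculusrules} and the basic properties of the bosonic Fourier transform,'' and you simply spell out the details it leaves implicit. Your sign bookkeeping checks out (the symplectic pairs reassemble into $\pm i\uyb$ and $\pm i\upyb$ exactly as claimed), and the iteration to the second-order rules via the centrality of $y^2$ and $\py^2$ is sound.
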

\begin{proof}
Immediate, using formula (\ref{splitting}), lemma \ref{fermcalculusrules} and the basic properties of the bosonic Fourier transform.
\end{proof}

We also have the following lemma concerning the Fourier transform of the convolution of two functions.
\begin{lemma}
Let $f, g$ be elements of $L_1(\mR^m)_{m|2n}$. Then the following holds
\[
\cF_{m | 2n}^{\pm} (f*g) = (2 \pi )^{M/2} \cF_{m | 2n}^{\pm} (f) \cF_{m | 2n}^{\pm} (g).
\]
\end{lemma}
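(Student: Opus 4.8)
The plan is to mimic the classical proof: write $\cF_{m|2n}^{\pm}(f*g)$ as an iterated super-integral, factorize the kernel using the bilinearity of the pairing, and reduce to a product of two Fourier integrals by a change of the integration variable. Spelling out the super-integral as $\int_{\mR^{m|2n},u} = \int_{\mR^m} dV(\uu)\int_{B,u}$ and inserting the definition of the convolution, one obtains
\[
\cF_{m|2n}^{\pm}(f*g)(y) = (2\pi)^{-M/2}\int_{\mR^{m|2n},u}\int_{\mR^{m|2n},x} \exp(\mp i\langle u,y\rangle)\, f(u-x)\, g(x).
\]
The essential structural input is that $\langle u,y\rangle = \langle u-x,y\rangle + \langle x,y\rangle$ by bilinearity, and that $\langle\cdot,\cdot\rangle$ takes values in the even (hence central) part of the algebra. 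Therefore the kernel factorizes as $\exp(\mp i\langle u-x,y\rangle)\exp(\mp i\langle x,y\rangle)$, the two factors commuting with one another and with all of $f$ and $g$.

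Next I would substitute $v = u-x$, i.e. $\uv = \uu - \ux$ and $\uvb = \uub - \uxb$. For the bosonic part this is the usual translation invariance of Lebesgue measure; for the fermionic part it is the translation invariance of the Berezin integral, valid because the shift acts on the anticommuting derivatives only through the identity $\partial_{{u\grave{}}_i}=\partial_{{v\grave{}}_i}$. After the substitution the integrand reads $\exp(\mp i\langle v,y\rangle)\, f(v)\,\exp(\mp i\langle x,y\rangle)\, g(x)$, where the centrality of $\exp(\mp i\langle x,y\rangle)$ has been used to slide it past $f(v)$. Applying Fubini and separating the $v$- and $x$-integrations then produces a product of two copies of the Fourier integral, each equal to $(2\pi)^{M/2}$ times $\cF_{m|2n}^{\pm}(f)$, resp.\ $\cF_{m|2n}^{\pm}(g)$; collecting the prefactors via $-M/2 + M/2 + M/2 = M/2$ yields the claimed identity with the factor $(2\pi)^{M/2}$.

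The step requiring the most care is the separation of the iterated integral into a product, since $f$ and $g$ are algebra-valued and the Berezin integral is built from the anticommuting derivatives $\partial_{{x\grave{}}_{2n}}\cdots\partial_{{x\grave{}}_{1}}$, so a priori moving a factor $f(v)$ past these derivatives could produce signs. Here the decisive fact is that, because we always work with an \emph{even} number $2n$ of anticommuting variables, the Berezin integral is an even operator; hence an $x$-independent factor such as $\exp(\mp i\langle v,y\rangle)\, f(v)$ can be pulled out of $\int_{B,x}$ with no residual sign (the $2n$ individual signs cancel in pairs), and likewise a $v$-independent factor pulls out of $\int_{B,v}$. This is precisely where the evenness of the Berezin integral -- one of the structural advantages stressed in the Introduction -- enters, and it is what renders the non-commutativity of the codomain harmless. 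Since $f$ is kept to the left of $g$ throughout, the resulting product $\cF_{m|2n}^{\pm}(f)\,\cF_{m|2n}^{\pm}(g)$ appears in the correct (non-commutative) order.
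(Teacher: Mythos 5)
Your proof is correct and takes essentially the same route as the paper's: insert the definition of the convolution, factor the kernel via $\langle u , y \rangle = \langle u-x , y \rangle + \langle x , y \rangle$, and use translation invariance of the super-integral (Lebesgue plus Berezin) to split the iterated integral into $(2\pi)^{M/2}\cF_{m|2n}^{\pm}(f)$ times $(2\pi)^{M/2}\cF_{m|2n}^{\pm}(g)$, absorbing the prefactor $(2\pi)^{-M/2}$. Your additional justifications --- the evenness of the Berezin integral and the centrality of the even-valued kernel --- simply make explicit what the paper's computation leaves implicit.
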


\begin{proof}
We calculate the left-hand side as
\begin{eqnarray*}
(2 \pi)^{M/2} \cF_{m | 2n}^{\pm} (f*g) &=&  \int_{\mR^{m|2n},u} \int_{\mR^{m|2n},x} \exp{(\mp i \langle u , y \rangle)} f(u-x) g(x)\\
&=& \int_{\mR^{m|2n},x} \left[ \int_{\mR^{m|2n},u} \exp{(\mp i \langle u-x , y \rangle)} f(u-x) \right] \exp{(\mp i \langle x , y \rangle)} g(x)\\
&=&\int_{\mR^{m|2n},x} (2 \pi)^{M/2}  \cF_{m | 2n}^{\pm}(f)(y) \exp{(\mp i \langle x , y \rangle)} g(x)\\
&=& (2 \pi)^{M}\cF_{m | 2n}^{\pm}(f)(y) \cF_{m | 2n}^{\pm}(g)(y) 
\end{eqnarray*}
which completes the proof.
\end{proof}

Now we have the following full Parseval theorem.
\begin{theorem}[Parseval]
Let $f, g \in L_2(\mR^m)_{m|2n}$. Then the following holds:
\[
\int_{\mR^{m|2n},x} f(x) \overline{g(x)} = \int_{\mR^{m|2n},y} \cF_{m | 2n}^{\pm} (f)(y) \overline{\cF_{m | 2n}^{\pm} (g)(y)}.
\]
\end{theorem}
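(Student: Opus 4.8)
The plan is to reduce the full statement to the two cases already at our disposal: the classical Plancherel theorem for the bosonic Fourier transform $\cF_{m|0}^{\pm}$ (see \cite{MR0304972}) and the fermionic Parseval theorem (Theorem \ref{fermparseval}) for $\cF_{0|2n}^{\pm}$, glued together through the splitting (\ref{splitting}). Both sides of the asserted identity are sesquilinear in the pair $(f,g)$, and the Clifford generators $e_i,{e\grave{}}_j$ are left untouched by both the Fourier transform and the two integrations, so they pass through as constants. It therefore suffices by linearity to treat a product $f(x)=a(\ux)\,\omega$ and $g(x)=b(\ux)\,\eta$, where $a,b\in L_2(\mR^m)$ and $\omega,\eta\in\Lambda_{2n}$, and to reinstate the Clifford content componentwise at the very end.

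First I would rewrite the left-hand side. Since the commuting variables $x_i$ commute with the anticommuting variables ${x\grave{}}_j$, the factor $a\overline{b}$ is purely bosonic and can be extracted from the Berezin integral, so that the superspace integral factorizes as
\[
\int_{\mR^{m|2n},x} f\,\overline{g} = \left(\int_{\mR^m} a(\ux)\,\overline{b(\ux)}\,dV(\ux)\right)\left(\int_{B,x} \omega\,\overline{\eta}\right).
\]
Next I would treat the right-hand side identically. By the splitting (\ref{splitting}) one has $\cF_{m|2n}^{\pm}(a\omega)=\cF_{m|0}^{\pm}(a)\,\cF_{0|2n}^{\pm}(\omega)$, where $\cF_{m|0}^{\pm}(a)$ depends only on $\uy$ while $\cF_{0|2n}^{\pm}(\omega)\in\Lambda_{2n}$ depends only on $\uyb$. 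As these two factors again commute, the right-hand integral splits in the same manner:
\[
\int_{\mR^{m|2n},y} \cF_{m|2n}^{\pm}(f)\,\overline{\cF_{m|2n}^{\pm}(g)} = \left(\int_{\mR^m} \cF_{m|0}^{\pm}(a)\,\overline{\cF_{m|0}^{\pm}(b)}\,dV(\uy)\right)\left(\int_{B,y}\cF_{0|2n}^{\pm}(\omega)\,\overline{\cF_{0|2n}^{\pm}(\eta)}\right).
\]
The bosonic factor equals $\int_{\mR^m} a\,\overline{b}\,dV$ by the classical Plancherel theorem for $\cF_{m|0}^{\pm}$, and the fermionic factor equals $\int_{B,x}\omega\,\overline{\eta}$ by Theorem \ref{fermparseval}. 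Comparing the two factorized expressions then yields the claim.

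The step I expect to require the most care is the factorization of the Berezin integral itself, that is, justifying both that the purely bosonic factor $a\overline{b}$ may be pulled out of $\int_{B,x}$ and that the fermionic part of $\cF_{m|2n}^{\pm}(f)$ may be commuted past the bosonic part of $\overline{\cF_{m|2n}^{\pm}(g)}$ without producing spurious signs. This is precisely where the convention that all commuting variables (and all Clifford generators) commute with the anticommuting variables enters: because $a\overline{b}$ contains no anticommuting variables it is even with respect to the Grassmann grading and hence commutes with everything, so no signs arise. The only remaining bookkeeping concerns the complex conjugation, which acts componentwise in a fixed real basis and therefore commutes harmlessly with both the factorization and the componentwise reinstatement of the Clifford generators.
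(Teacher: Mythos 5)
Your proposal is correct and takes essentially the same approach as the paper, whose entire proof is the remark that the result ``follows immediately from the classical Parseval theorem combined with theorem \ref{fermparseval}''. You have simply made explicit the details the paper leaves implicit: the reduction by sesquilinearity to elementary tensors, the factorization of the Berezin integral via the splitting (\ref{splitting}), and the harmless bookkeeping for the Clifford generators and complex conjugation.
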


\begin{proof}
This follows immediately from the classical Parseval theorem combined with theorem \ref{fermparseval}.
\end{proof}

Let us now calculate the Fourier transform of the super Gaussian function:
\begin{eqnarray*}
\cF_{m | 2n}^{\pm} (\exp(x^2/2)) &=& \cF_{m | 2n}^{\pm} (\exp(\ux^2/2)\exp(\uxb^2/2))\\
&=& \cF_{m | 0}^{\pm} (\exp(\ux^2/2)) \cF_{0 | 2n}^{\pm} \exp(\uxb^2/2))\\
&=& \exp(\uy^2/2)\exp(\uyb^2/2)\\
&=& \exp(y^2/2).
\end{eqnarray*}

The theorems \ref{bosharmfour} and \ref{fermharmfour} can be merged into the following theorem. The proof makes extensive use of the Clifford-Hermite polynomials introduced in section \ref{preliminaries}.
\begin{theorem}
Let $H_l(x) \in \cH_l$ be a spherical harmonic of degree $l$. Then one has
\[
\cF_{m | 2n}^{\pm} (H_l(x) \exp(x^2/2))(y) = (\pm i)^l H_l(y) \exp(y^2/2).
\]
\label{fouriertransformsphericalharm}
\end{theorem}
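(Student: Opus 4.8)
The plan is to reduce the statement for a general super spherical harmonic to the case of a spherical monogenic, and then to settle the monogenic case by combining an operator intertwining with representation-theoretic rigidity. First I would record the one relation that drives everything: from Lemma \ref{superfouriercalculus} one has $\cF_{m|2n}^{\pm}(\px g) = \pm i\, y\,\cF_{m|2n}^{\pm}(g)$ and $\cF_{m|2n}^{\pm}(x g) = \pm i\, \py\,\cF_{m|2n}^{\pm}(g)$, hence $\cF_{m|2n}^{\pm}((\px+x)g) = \pm i\,(\py+y)\,\cF_{m|2n}^{\pm}(g)$. In other words, $\cF_{m|2n}^{\pm}$ conjugates the Clifford--Hermite creation operator $\px+x$ into $\pm i(\py+y)$, and it intertwines the harmonic oscillator $x^2-\Delta$ in the $x$-variables with the identical operator $y^2-\Delta$ in the $y$-variables.

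For the reduction I would use that, since $M\notin -2\mN$ (the regime in which the Clifford--Hermite functions form a basis), every $H_l\in\cH_l$ admits a Fischer-type splitting $H_l = M_l + x M_{l-1}$ with $M_l\in\cM_l$ and $M_{l-1}\in\cM_{l-1}$; harmonicity follows from the calculus rule $\px(xM_{l-1}) = (2(l-1)+M)M_{l-1}$ recorded in the preliminaries, which gives $\Delta(xM_{l-1})=0$. The essential tool is then the $t=0$ instance of the Rodrigues formula of \cite{DBS3}, namely $\px(M_k\exp(x^2/2)) = xM_k\exp(x^2/2)$ for monogenic $M_k$, equivalently $(\px+x)(M_k\exp(x^2/2)) = 2\,xM_k\exp(x^2/2)$. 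Applying the creation-operator intertwining to $M_{l-1}\exp(x^2/2)$ turns $\cF_{m|2n}^{\pm}(xM_{l-1}\exp(x^2/2))$ into $\tfrac{\pm i}{2}(\py+y)\cF_{m|2n}^{\pm}(M_{l-1}\exp(x^2/2))$, so once the monogenic base case is known in every degree, the harmonic statement follows by linearity: $\cF_{m|2n}^{\pm}(H_l\exp(x^2/2)) = (\pm i)^l(M_l(y)+yM_{l-1}(y))\exp(y^2/2)$.

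It therefore remains to prove the base case $\cF_{m|2n}^{\pm}(M_k\exp(x^2/2)) = (\pm i)^k M_k(y)\exp(y^2/2)$. Using the key identity twice yields $\Delta(M_k\exp(x^2/2)) = (2k+M+x^2)M_k\exp(x^2/2)$, so $M_k\exp(x^2/2)$ is an eigenfunction of $x^2-\Delta$ with eigenvalue $-(2k+M)$; by the oscillator intertwining, $G_k:=\cF_{m|2n}^{\pm}(M_k\exp(x^2/2))$ has the same eigenvalue for $y^2-\Delta$. Moreover, applying $\cF_{m|2n}^{\pm}$ to $\px(M_k\exp(x^2/2)) = xM_k\exp(x^2/2)$ and invoking both intertwining relations gives $\pm i\, y\,G_k = \pm i\,\py G_k$, i.e.\ $(\py-y)G_k=0$. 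Writing $G_k = u\exp(y^2/2)$ and using the same twisted-Leibniz identity (now reading $(\py-y)(u\exp(y^2/2)) = (\py u)\exp(y^2/2)$) forces $u$ to be monogenic, while the eigenvalue $-(2k+M)$ singles out the homogeneous degree $k$ (distinct degrees give distinct eigenvalues when $M\notin-2\mN$). Hence $G_k = N_k(y)\exp(y^2/2)$ with $N_k\in\cM_k$ depending linearly on $M_k$.

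Finally I would identify the scalar. The map $M_k\mapsto N_k$ is an $SO(m)\times Sp(2n)$-equivariant endomorphism of $\cM_k$, because the kernel $\langle x,y\rangle$ is $SO(m)\times Sp(2n)$-invariant; by Schur's lemma it acts as a scalar on each isotypic component, and restricting to the purely bosonic monogenics recovers Theorem \ref{bosharmfour} while restricting to the purely fermionic ones recovers Theorem \ref{fermharmfour}, fixing the scalar to $(\pm i)^k$ on every component. I expect the main obstacle to be precisely this last identification together with the base case: the ground states $M_k\exp(x^2/2)$ cannot be produced from lower data by the raising/lowering operators, so the factor $(\pm i)^k$ has to be pinned down directly, and the only clean route I see is the oscillator-eigenvalue bookkeeping above combined with equivariance and the two already-established scalar theorems. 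A secondary technical point is the careful verification, in superspace, of the identity $\px(M_k\exp(x^2/2)) = xM_k\exp(x^2/2)$, where the anticommuting variables and the odd symplectic generators $\grave{e}_j$ produce sign factors that must be checked to cancel.
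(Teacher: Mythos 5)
Your intertwining machinery is sound and genuinely different from the paper's route: Lemma \ref{superfouriercalculus} does give $\cF^{\pm}_{m|2n}\bigl((\px+x)g\bigr)=\pm i\,(\py+y)\,\cF^{\pm}_{m|2n}(g)$; the superspace identity $\px\bigl(M_k\exp(x^2/2)\bigr)=xM_k\exp(x^2/2)$ holds (it follows term by term from Lemma 1 applied to the series for $\exp(x^2/2)$, and the graded signs you worry about are harmless because $\exp(x^2/2)$ is even and central --- this same fact validates $(\py-y)\bigl(u\exp(y^2/2)\bigr)=(\py u)\exp(y^2/2)$ even for Clifford-valued $u$); and your base-case bookkeeping correctly forces $G_k=\cF^{\pm}_{m|2n}\bigl(M_k\exp(x^2/2)\bigr)=N_k(y)\exp(y^2/2)$ with $N_k\in\cM_k$ depending linearly on $M_k$, modulo the small repair that you must first show $u=G_k\exp(-y^2/2)$ is a polynomial (e.g.\ from $\mE u = ku$ together with smoothness at the origin) before reading off homogeneity. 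The splitting $\cH_l=\cM_l\oplus x\cM_{l-1}$ is also fine, but it relies on the Fischer decomposition and hence on $M\notin-2\mN$, whereas the theorem is stated without restriction and the paper's proof (via Theorem \ref{completedecomp} and an explicit Clifford--Hermite identity) covers all $m\geq 1$ and $n$.

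The genuine gap is your final Schur step. First, $\cM_k$ is not an $SO(m)\times Sp(2n)$-module for the action on the variables alone: $\Delta$ is invariant under that action (which is why the paper decomposes $\cH_k$ under it), but $\px$ is not, so monogenicity is preserved only under a combined action on the variables and on the Clifford--Weyl generators, and the decomposition of $\cM_k$ under that larger symmetry is established nowhere in the paper and is not routine. Second, and decisively: even granting that your map acts by a scalar on each irreducible summand, restricting to purely bosonic and purely fermionic elements determines those scalars only on summands that actually meet the pure subspaces. Theorem \ref{completedecomp} shows that, besides the pieces $\cH^b_{k-i}\otimes\cH^f_i$, there are mixed pieces $f_{l,k-2l-j,j}\,\cH^b_{k-2l-j}\otimes\cH^f_j$ with $l\geq 1$, whose elements are polynomials in $\ux^2,\uxb^2$ times products of bosonic and fermionic harmonics; these contain no purely bosonic and no purely fermionic elements, so your argument leaves the scalar on them undetermined. (The $l=0$ pieces are in any case easy: by the factorization $\cF^{\pm}_{m|2n}=\cF^{\pm}_{m|0}\circ\cF^{\pm}_{0|2n}$ and Theorems \ref{bosharmfour} and \ref{fermharmfour}, a product $H^b_{k-i}H^f_i$ picks up $(\pm i)^{k-i}(\pm i)^{i}$.) Pinning the factor $(\pm i)^k$ on the mixed pieces is exactly the hard content of the paper's proof: the prefactor $f_{k,l-2k-j,j}$ is converted by Lemma \ref{superfouriercalculus} into the differential operators $\upy^{2k-2i}\upyb^{2i}$ acting on the transformed Gaussians, these are recognized as rescaled Clifford--Hermite polynomials $\widetilde{CH}$, and the resulting identity (\ref{substhermite}) is verified by a binomial-sum evaluation. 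Some substitute for that computation --- or a proof that, under the correct super spin symmetry, every irreducible summand of $\cM_k$ is generated from the pure ones --- is unavoidable; as written, your argument proves the theorem only on the $l=0$ part of the decomposition.
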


\begin{proof}
Due to theorem \ref{completedecomp} it suffices to give the proof in the case where $H_l(x)$ is of the following form:
\[
H_l(x) = f_{k,l-2k-j,j} H^b_{l-2k-j} H^f_{j}
\]
with
\[
f_{k,l-2k-j,j}(\ux^2,\uxb^2) =  \sum_{i=0}^k \binom{k}{i} \frac{(n-j-i)!}{\Gamma(\frac{m}{2} + l-k-j-i)} \ux^{2k-2i} \uxb^{2i}
\]
and with $H^b_{l-2k-j} \in \cH_{l-2k-j}^b$, $H^f_{j} \in \cH_j^f$.

We then calculate the Fourier transform of $H_l(x) \exp(x^2/2)$ as
\begin{eqnarray*}
&&\cF_{m | 2n}^{\pm} (H_l(x) \exp(x^2/2))\\ 
&=& \cF_{m | 2n}^{\pm} ( f_{k,l-2k-j,j} H^b_{l-2k-j} H^f_{j} \exp(x^2/2))\\
&=& (-1)^k \sum_{i=0}^k \binom{k}{i} \frac{(n-j-i)!}{\Gamma(\frac{m}{2} + l-k-j-i)} \upy^{2k-2i} \upyb^{2i} \cF_{m | 2n}^{\pm} (  H^b_{l-2k-j} H^f_{j} \exp(x^2/2))\\
&=& (-1)^k \sum_{i=0}^k \binom{k}{i} \frac{(n-j-i)!}{\Gamma(\frac{m}{2} + l-k-j-i)} \upy^{2k-2i} \upyb^{2i} \cF_{m | 0}^{\pm} (  H^b_{l-2k-j} \exp(\ux^2/2))  \cF_{0 | 2n}^{\pm} ( H^f_{j} \exp(\uxb^2/2))\\
&=& (\pm i)^l \sum_{i=0}^k \binom{k}{i} \frac{(n-j-i)!}{\Gamma(\frac{m}{2} + l-k-j-i)} \upy^{2k-2i} \upyb^{2i}  H^b_{l-2k-j}(\uy) \exp(\uy^2/2)   H^f_{j}(\uyb) \exp(\uyb^2/2)\\
&=& (\pm i)^l \sum_{i=0}^k \binom{k}{i} \frac{(n-j-i)!}{\Gamma(\frac{m}{2} + l-k-j-i)} \widetilde{CH}_{ 2k-2i,m,l-2k-j}(\uy) \widetilde{CH}_{ 2i,-2n,j}(\uyb)\\
&& \times  H^b_{l-2k-j}(\uy) \exp(\uy^2/2)   H^f_{j}(\uyb) \exp(\uyb^2/2).
\end{eqnarray*}

So we still need to prove that
\begin{equation}
\sum_{i=0}^k \binom{k}{i} \frac{(n-j-i)!}{\Gamma(\frac{m}{2} + l-k-j-i)} \widetilde{CH}_{ 2k-2i,m,l-2k-j}(\uy) \widetilde{CH}_{ 2i,-2n,j}(\uyb) = f_{k,l-2k-j,j}(\uy^2,\uyb^2).
\label{substhermite}
\end{equation}
The explicit form of the Clifford-Hermite polynomials (see \cite{DBS3}, theorem 5) is given by
\[
\widetilde{CH}_{2t,M,k}(y) = \sum_{i=0}^t 2^{2t-2i}\left( \begin{array}{l}t\\i \end{array} \right)\frac{\Gamma(t+k+M/2)}{\Gamma(i+k+M/2)} y^{2i} 
\]
or, in the case where $M=-2n$, by
\[
\widetilde{CH}_{2t,-2n,k}(y) = \sum_{i=0}^t (-1)^{t-i} 2^{2t-2i}\left( \begin{array}{l}t\\i \end{array} \right)\frac{(n-k-i)!}{(n-k-t)!} y^{2i}.
\]

Plugging these expressions into the left-hand side of (\ref{substhermite}) yields
\begin{eqnarray*}
&& \sum_{i=0}^k \binom{k}{i} \frac{(n-j-i)!}{\Gamma(\frac{m}{2} + l-k-j-i)} \widetilde{CH}_{ 2k-2i,m,l-2k-j}(\uy) \widetilde{CH}_{ 2i,-2n,j}(\uyb)\\
&=& \sum_{i=0}^k  \sum_{p=0}^{k-i}  \sum_{q=0}^i 2^{2k-2p-2q} (-1)^{i-q} \binom{k}{i} \binom{k-i}{p} \binom{i}{q}   \frac{(n-j-q)!}{\Gamma(\frac{m}{2} + p +l - 2k-j)} \uy^{2p} \uyb^{2q}\\
&=& \sum_{p=0}^k  \sum_{q=0}^{k-p}  \sum_{i=q}^{k-p} 2^{2k-2p-2q} (-1)^{i-q} \binom{k}{i} \binom{k-i}{p} \binom{i}{q}   \frac{(n-j-q)!}{\Gamma(\frac{m}{2} + p +l -2k-j)} \uy^{2p} \uyb^{2q}\\
&=& \sum_{p=0}^k  \sum_{q=0}^{k-p} 2^{2k-2p-2q} (-1)^{q}  \frac{(n-j-q)!}{\Gamma(\frac{m}{2} + p +l-2k-j)} \uy^{2p} \uyb^{2q} \sum_{i=q}^{k-p} (-1)^{i} \binom{k}{i} \binom{k-i}{p} \binom{i}{q}  \\
&=&\sum_{p=0}^k  \binom{k}{p}  \frac{(n-j-k+p)!}{\Gamma(\frac{m}{2} + p +l-2k-j)} \uy^{2p} \uyb^{2k-2p} \\
&=&\sum_{i=0}^k \binom{k}{i} \frac{(n-j-i)!}{\Gamma(\frac{m}{2} + l-k-j-i)} \uy^{2k-2i} \uyb^{2i}\\
&=&f_{k,l-2k-j,j}(\uy^2,\uyb^2)
\end{eqnarray*}
since
\[
\sum_{i=q}^{k-p} (-1)^{i} \binom{k}{i} \binom{k-i}{p} \binom{i}{q} =\left\{ \begin{array}{ll} (-1)^{q} \binom{k}{p} &\mbox{if $k=p+q$}\\0& \mbox{otherwise.} \end{array} \right.
\]
This completes the proof of the theorem.
\end{proof}

As a consequence we obtain
\begin{corollary}
One has that
\[
H_l(\px) \exp(x^2/2) =  H_l(x) \exp(x^2/2)
\]
where $H_l(\px)$ is the operator obtained by substituting $x_i \rightarrow - \partial_{x_i}$, ${x \grave{}}_{2i} \rightarrow 2\partial_{{x \grave{}}_{2i-1}}$ and ${x \grave{}}_{2i-1} \rightarrow -2\partial_{{x \grave{}}_{2i}}$ in $H_l(x)$.
\label{CorDiffHarm}
\end{corollary}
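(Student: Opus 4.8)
The plan is to read the corollary directly off Theorem~\ref{fouriertransformsphericalharm}, by observing that under the Fourier transform multiplication by the polynomial $H_l(x)$ is conjugate to the differential operator $H_l(\px)$ up to a scalar. First I would examine the six substitution rules in Lemma~\ref{superfouriercalculus} coordinate by coordinate: on the Fourier side multiplication by $x_i$ becomes $\mp i\,\partial_{y_i}$, multiplication by ${x \grave{}}_{2i}$ becomes $\pm 2i\,\partial_{{y \grave{}}_{2i-1}}$, and multiplication by ${x \grave{}}_{2i-1}$ becomes $\mp 2i\,\partial_{{y \grave{}}_{2i}}$. Comparing these with the substitution $x_i\to-\partial_{y_i}$, ${x \grave{}}_{2i}\to 2\partial_{{y \grave{}}_{2i-1}}$, ${x \grave{}}_{2i-1}\to -2\partial_{{y \grave{}}_{2i}}$ that defines the Dirac operator $\py$ in the $y$ variables, I see that each Fourier rule is precisely $\pm i$ times the corresponding entry of that Dirac substitution.

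Next I would apply these rules iteratively to $\cF_{m|2n}^{\pm}(H_l(x)\exp(x^2/2))$. Writing $H_l(x)$ as a sum of monomials in the commuting and anti-commuting variables with Clifford-valued coefficients and transforming term by term, Lemma~\ref{superfouriercalculus} pulls each variable out as its associated first-order operator, in the same order. The consistency point to confirm is that this replacement respects both the commutativity of the $x_i$ and the anticommutativity of the ${x \grave{}}_j$; it does, since the bosonic derivatives $\partial_{y_i}$ commute and the fermionic derivatives anticommute, matching the relations among the variables. As $H_l$ is homogeneous of degree $l$, the $l$ scalar factors $\pm i$ combine to give
\[
\cF_{m|2n}^{\pm}(H_l(x)\exp(x^2/2))(y) = (\pm i)^l\,H_l(\py)\,\cF_{m|2n}^{\pm}(\exp(x^2/2))(y),
\]
where $H_l(\py)$ is the operator obtained by substituting $y\to\py$ in $H_l$.

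Then I would insert the Gaussian transform $\cF_{m|2n}^{\pm}(\exp(x^2/2))=\exp(y^2/2)$ computed above and invoke Theorem~\ref{fouriertransformsphericalharm}, which evaluates the same left-hand side as $(\pm i)^l H_l(y)\exp(y^2/2)$. Equating the two and cancelling the common nonzero factor $(\pm i)^l$ leaves $H_l(\py)\exp(y^2/2)=H_l(y)\exp(y^2/2)$; renaming $y$ to $x$ is exactly the assertion.

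The step I expect to require the most care is the iterated application of Lemma~\ref{superfouriercalculus}: one must check that carrying several anti-commuting variables through the transform yields precisely the ordered product of fermionic derivatives, with the anticommutation signs agreeing on both sides, so that the replacement is a genuine algebra homomorphism and the homogeneity count of $\pm i$ factors is clean. Once that bookkeeping is settled, the scalar $(\pm i)^l$ cancels against the one supplied by Theorem~\ref{fouriertransformsphericalharm} and the corollary follows at once.
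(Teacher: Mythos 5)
Your proposal is correct and follows exactly the route the paper intends: the corollary is stated there ``as a consequence'' of Theorem \ref{fouriertransformsphericalharm}, obtained by transforming $H_l(x)\exp(x^2/2)$ monomial by monomial via Lemma \ref{superfouriercalculus} (each substitution rule being $(\pm i)$ times the rule defining $H_l(\px)$), inserting $\cF_{m|2n}^{\pm}(\exp(x^2/2))=\exp(y^2/2)$, and cancelling the common factor $(\pm i)^l$ against the eigenvalue in the theorem. Your attention to the sign bookkeeping for the anti-commuting variables is exactly the right point of care, and it works out as you say since the fermionic derivatives satisfy the same anticommutation relations as the variables they replace.
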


Under mild assumptions on the super-dimension $M$ we can prove the converse of the previous corollary.
\begin{lemma}
Suppose $M \not \in -2 \mN$. If $p(x)$ is a homogeneous polynomial of degree $k$ satisfying
\[
p(\px) \exp(x^2/2) =  p(x) \exp(x^2/2)
\]
then $p(x) \in \cH_k$.
\end{lemma}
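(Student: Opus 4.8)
The plan is to reduce the hypothesis to a purely polynomial identity by means of the Fischer decomposition together with Corollary~\ref{CorDiffHarm}, and then to read off that every non-harmonic component of $p$ must vanish. Since $M \notin -2\mN$, the Fischer decomposition
\[
\cP_k = \bigoplus_{j=0}^{\lfloor k/2 \rfloor} x^{2j}\,\cH_{k-2j}
\]
is available, so I would first write $p = \sum_{j=0}^{s} x^{2j} H_{k-2j}$ with $H_{k-2j}\in\cH_{k-2j}$ and $s=\lfloor k/2\rfloor$. The goal then becomes to prove $H_{k-2j}=0$ for every $j\geq 1$, which forces $p = H_k \in \cH_k$.

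Next I would exploit that the substitution $x_i \mapsto -\partial_{x_i}$, ${x \grave{}}_{2i}\mapsto 2\partial_{{x \grave{}}_{2i-1}}$, ${x \grave{}}_{2i-1}\mapsto -2\partial_{{x \grave{}}_{2i}}$ defining $q\mapsto q(\px)$ preserves all the (anti)commutation relations of $\cP$ and is therefore an algebra homomorphism; since the central element $x^2$ is sent to $\Delta$, this gives $p(\px) = \sum_{j} \Delta^{j} H_{k-2j}(\px)$. Feeding this into $\exp(x^2/2)$ and applying Corollary~\ref{CorDiffHarm} to each harmonic factor yields
\[
p(\px)\exp(x^2/2) = \sum_{j=0}^{s} \Delta^{j}\bigl[H_{k-2j}(x)\exp(x^2/2)\bigr].
\]
To evaluate the right-hand side I would use the conjugation identity $\exp(-x^2/2)\,\Delta\,\exp(x^2/2) = (\px+x)^2$, a direct consequence of the relation $(\px+x)^2 = \Delta + x^2 + 2\mE + M$ recorded above; it turns $\Delta^{j}[H_{k-2j}\exp(x^2/2)]$ into $\widetilde{CH}_{2j,M,k-2j}(x)\,H_{k-2j}\exp(x^2/2)$. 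Dividing the hypothesis by the Gaussian then leaves the polynomial identity
\[
\sum_{j=0}^{s} \widetilde{CH}_{2j,M,k-2j}(x)\,H_{k-2j} = \sum_{j=0}^{s} x^{2j} H_{k-2j}.
\]

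Finally I would compare homogeneous degrees. Using the expansion $\widetilde{CH}_{2j,M,l}(x) = x^{2j} + 4j\,(j+l-1+\tfrac{M}{2})\,x^{2j-2} + \cdots$, the top (degree $k$) parts cancel identically, while the degree-$(k-2)$ part of the difference reads
\[
\sum_{j=1}^{s} 4j\Bigl(k-j-1+\tfrac{M}{2}\Bigr)\, x^{2j-2} H_{k-2j} = 0.
\]
Each summand lies in a distinct Fischer component $x^{2(j-1)}\cH_{k-2j}$ of $\cP_{k-2}$, so uniqueness of the Fischer decomposition forces $4j\bigl(k-j-1+\tfrac{M}{2}\bigr)H_{k-2j}=0$ for every $j$; since $k-j-1\geq 0$ throughout $1\leq j\leq s$, the factor $k-j-1+\tfrac{M}{2}$ cannot vanish when $M\notin -2\mN$, whence $H_{k-2j}=0$ for all $j\geq 1$ and $p\in\cH_k$. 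I expect the main obstacle to be the bookkeeping that justifies that $q\mapsto q(\px)$ is a homomorphism (tracking the signs produced by the anticommuting generators) and that pins down the subleading coefficient of $\widetilde{CH}_{2j,M,k-2j}$; once these are secured, the single degree-$(k-2)$ comparison together with Fischer uniqueness closes the argument.
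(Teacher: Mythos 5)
Your proposal is correct and follows essentially the same route as the paper's proof: Fischer decomposition of $p$ (valid since $M \not\in -2\mN$), the observation that $p(\px) = \sum_j \Delta^j H_{k-2j}(\px)$, Corollary \ref{CorDiffHarm} applied to each harmonic piece, and the identity $\exp(-x^2/2)\Delta^j \exp(x^2/2) H_{k-2j} = \widetilde{CH}_{2j,M,k-2j}(x)\,H_{k-2j}$, reducing everything to the polynomial identity $\sum_j \widetilde{CH}_{2j,M,k-2j}(x) H_{k-2j} = \sum_j x^{2j} H_{k-2j}$. The only difference is that where the paper declares the conclusion ``clear'' from that identity, you make it explicit via the degree-$(k-2)$ comparison with subleading coefficient $4j\left(k-j-1+\tfrac{M}{2}\right) \neq 0$ for $M \not\in -2\mN$, which is a correct and welcome sharpening of the same argument.
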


\begin{proof}
As $M \not \in -2 \mN$, $p$ can be developed in a Fischer decomposition (see \cite{DBS5}):
\[
p = \sum_{j=0}^{\lfloor k/2 \rfloor} x^{2j} H_{k-2j}
\]
with $H_{k-2j} \in \cH_{k-2j}$. We then have that
\begin{eqnarray*}
p(\px) \exp(x^2/2) &=&  \sum_{j=0}^{\lfloor k/2 \rfloor} \Delta^{j} H_{k-2j}(\px) \exp(x^2/2)\\
&=& \sum_{j=0}^{\lfloor k/2 \rfloor} \Delta^{j} H_{k-2j}(x) \exp(x^2/2)\\
&=& \sum_{j=0}^{\lfloor k/2 \rfloor}  \widetilde{CH}_{2j,M,k-2j}(x)  H_{k-2j}(x) \exp(x^2/2)
\end{eqnarray*}
where we have used corollary \ref{CorDiffHarm}.
From this expression it is clear that 
\[
p(\px) \exp(x^2/2) =  p(x) \exp(x^2/2)
\]
if and only if $p(x) = H_k(x)$.
\end{proof}

Now we can construct an eigenfunction basis of the super Fourier transform. We consider, as in section \ref{preliminaries}, the set of functions $\phi_{j,k,l}(x)$ defined by
\begin{eqnarray*}
\phi_{j,k,l}(x) = (\px + x )^j M_k^{(l)} \exp{x^2/2}= CH_{j,M,k}(x) M_k^{(l)} \exp{x^2/2}
\end{eqnarray*}
which form a basis for $\cS(\mR^m)_{m|2n}$ if $M \not \in -2\mN$.

We calculate the Fourier transform of these functions; we obtain
\begin{eqnarray*}
\cF_{m | 2n}^{\pm} (\phi_{j,k,l}(x) ) &=& \cF_{m | 2n}^{\pm} ((\px + x )^j M_k^{(l)} \exp{x^2/2})\\
&=& (\pm i )^j (\py + y )^j \cF_{m | 2n}^{\pm}  ( M_k^{(l)} \exp{x^2/2})\\
&=& (\pm i)^j (\pm i )^k (\py + y )^j M_k^{(l)} \exp{y^2/2}\\
&=&  (\pm i )^{j+k} \phi_{j,k,l}(y)\\
&=& e^{ \pm i(j+k)\frac{\pi}{2}} \phi_{j,k,l}(y)
\end{eqnarray*}
where we have used theorem \ref{fouriertransformsphericalharm} and lemma \ref{superfouriercalculus}.
This means that the Fourier transform rotates the basic functions over a multiple of a right angle.

Similarly, we obtain that the Fourier transform acts on the scalar basis $\psi_{j,k,l}(x)$ as
\[
\cF_{m | 2n}^{\pm} (\psi_{j,k,l}(x) ) = e^{ \pm i(2j+k)\frac{\pi}{2}} \psi_{j,k,l}(y).
\]

On the other hand we have that the functions $\phi_{j,k,l}(x)$ are solutions of a super harmonic oscillator (see \cite{DBS3}). This means that they satisfy
\[
\frac{1}{2} (\Delta - x^2) \phi_{j,k,l}(x) = \left( \frac{M}{2} + (j+k) \right)\phi_{j,k,l}(x).
\]
This implies that the operator exponential $\cK$ defined by
\[
\cK^{\pm}= \exp{ \pm \frac{i \pi}{4}(\Delta - x^2 - M)}
\]
satisfies
\[
\cK^{\pm}\phi_{j,k,l}(x) = e^{ \pm i(j+k)\frac{\pi}{2}} \phi_{j,k,l}(x)
\]
and thus equals the Fourier transform.
In this way we have proven the following theorem.

\begin{theorem}
The super Fourier transform is the operator exponential
\[
\cF^{\pm}_{m|2n}= \exp{ \pm \frac{i \pi}{4}(\Delta - x^2 - M)}.
\]
\end{theorem}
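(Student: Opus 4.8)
The plan is to prove the two operators agree on the eigenbasis $\{\phi_{j,k,l}\}$ of $\cS(\mR^m)_{m|2n}$, which is legitimate precisely because this family is a basis whenever $M \not\in -2\mN$ (as recalled in section \ref{preliminaries}). Since a linear operator on a space with a given basis is determined by its values on that basis, matching the actions of $\cF^{\pm}_{m|2n}$ and of $\cK^{\pm} = \exp(\pm \frac{i\pi}{4}(\Delta - x^2 - M))$ element-by-element will force the two operators to coincide.

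First I would record the two ingredients that are already in place. On one hand, combining theorem \ref{fouriertransformsphericalharm} with the intertwining relations of lemma \ref{superfouriercalculus} gives the eigenvalue relation
\[
\cF^{\pm}_{m|2n}(\phi_{j,k,l}) = (\pm i)^{j+k}\,\phi_{j,k,l} = e^{\pm i(j+k)\frac{\pi}{2}}\,\phi_{j,k,l},
\]
exactly as computed above. On the other hand, the functions $\phi_{j,k,l}$ are the eigenfunctions of the super harmonic oscillator, so
\[
\tfrac{1}{2}(\Delta - x^2)\,\phi_{j,k,l} = \left(\tfrac{M}{2} + (j+k)\right)\phi_{j,k,l},
\]
and subtracting the constant $M/2$ yields $\tfrac{1}{2}(\Delta - x^2 - M)\,\phi_{j,k,l} = (j+k)\,\phi_{j,k,l}$. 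Thus each $\phi_{j,k,l}$ is an eigenfunction of the generator $\pm\frac{i\pi}{4}(\Delta - x^2 - M)$ with eigenvalue $\pm\frac{i\pi}{2}(j+k)$.

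The central step is to interpret $\cK^{\pm}$ through this spectral decomposition: one defines $\cK^{\pm}$ by applying the scalar exponential to each eigenvalue, so that $\cK^{\pm}\phi_{j,k,l} = \exp(\pm\frac{i\pi}{2}(j+k))\,\phi_{j,k,l} = e^{\pm i(j+k)\frac{\pi}{2}}\,\phi_{j,k,l}$. This is identical to the Fourier eigenvalue obtained above, and since both operators act by the same scalar on every basis element, they agree on all of $\cS(\mR^m)_{m|2n}$, proving the claim. I expect the only delicate point to be making the operator exponential of the unbounded operator $\Delta - x^2 - M$ rigorous: the exponential must be understood via its action on the eigenbasis (a functional-calculus definition) rather than as a convergent power series applied naively, and one should check that the resulting operator is well defined and continuous on Schwartz space. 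Because the basis diagonalizes the generator with purely discrete, explicitly known eigenvalues, this spectral definition is natural and is in fact exactly the reading under which the identity $\cF^{\pm}_{m|2n} = \cK^{\pm}$ holds.
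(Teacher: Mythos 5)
Your proof is correct and takes essentially the same route as the paper: the paper likewise diagonalizes both operators on the Clifford--Hermite basis, combining the eigenvalue relation $\cF^{\pm}_{m|2n}(\phi_{j,k,l}) = e^{\pm i (j+k)\frac{\pi}{2}} \phi_{j,k,l}$ (from theorem \ref{fouriertransformsphericalharm} and lemma \ref{superfouriercalculus}) with the super harmonic oscillator equation $\frac{1}{2}(\Delta - x^2)\phi_{j,k,l} = \left(\frac{M}{2}+(j+k)\right)\phi_{j,k,l}$ to conclude that $\cK^{\pm}$ and the Fourier transform agree on the basis and hence everywhere. Your explicit remarks on the functional-calculus reading of the operator exponential and on the requirement $M \not\in -2\mN$ for $\{\phi_{j,k,l}\}$ to be a basis merely make precise points the paper leaves implicit.
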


\section{Fundamental solution of the super Laplace operator}
\label{FundSolLaplaceOperator}

We do not aim at developing a complete theory of the Fourier transform on distributions in superspace in this section. We restrict ourselves to what is necessary for establishing the fundamental solution of the super Laplace operator $\Delta$.

We first calculate the Fourier transform of the super Dirac distribution. This yields
\begin{eqnarray*}
\cF^{\pm}_{m|2n} (\delta(x)) &=&  \cF^{\pm}_{m|0}(\delta(\ux)) \cF^{\pm}_{0|2n}(\frac{\pi^{n}}{n!}  \uxb^{2n})\\
&=& (2 \pi)^{-M/2}.
\end{eqnarray*}
Fourier transformation of the super Poisson equation $\Delta f(x) = \delta(x)$ then yields the algebraic equation
\[
-y^2 F(y) = (2 \pi)^{-M/2}
\]
with $F(y) = \cF^{\pm}_{m|2n}(f)$. Assuming $m >0$, we have that
\begin{eqnarray*}
(y^2)^{-1} &=& \uy^{-2} \left( 1 + \frac{\uyb^2}{\uy^2}\right)^{-1}\\
&=&\uy^{-2} \sum_{k =0}^{n} (-1)^k \left(\frac{\uyb^2}{\uy^2}\right)^k.
\end{eqnarray*}

It follows that the fundamental solution of the Laplace operator is given by the inverse Fourier transform of the following distribution
\[
F(y) = -(2 \pi)^{-M/2}\uy^{-2} \sum_{k =0}^{n} (-1)^k \left(\frac{\uyb^2}{\uy^2}\right)^k.
\]
We obtain that
\begin{eqnarray*}
f(x) &=& -(2 \pi)^{-M/2}\sum_{k =0}^{n} (-1)^k \cF^{\mp}_{m|0}(\uy^{-2k-2}) \cF^{\mp}_{0|2n}(\uyb^{2k})\\
&=&-(2 \pi)^{-M/2}\sum_{k =0}^{n} (-1)^k \cF^{\mp}_{m|0}(\uy^{-2k-2})  2^{2k-n} \frac{k!}{(n-k)!}  \uxb^{2n-2k}\\
&=& \pi^{n} \sum_{k =0}^{n}  \frac{2^{2k} k!}{(n-k)!} \cF^{\mp}_{m|0}\left(\frac{(2 \pi)^{-m/2} }{r^{2k+2}}\right)    \uxb^{2n-2k}
\end{eqnarray*}
with $r^2 = -\uy^2$. In this expression, $(2 \pi)^{-m/2} r^{-2k-2}$ is nothing else but the Fourier transform in distributional sense of the fundamental solution of the $(k+1)$-th power of the classical Laplace operator (see e.g. \cite{MR0435831}). Denoting this fundamental solution by $ \nu_{2k+2}^{m|0}$ we find that
\[
f(x) = \pi^{n} \sum_{k =0}^{n}  \frac{2^{2k} k!}{(n-k)!} \nu_{2k+2}^{m|0} \uxb^{2n-2k}.
\]
This result is the same as the one obtained in \cite{DBS6} using a completely different method. The Fourier method described in this section can also be used to construct a fundamental solution for the operator $\Delta^l$. Its main advantage over the one given in \cite{DBS6} is that no technical lemma of a combinatorial nature is needed.

\section{The fractional Fourier transform}
\label{FracFourierTrafo}

An extension of the fractional Fourier transform (see \cite{OZA} and references therein) to superspace may be introduced by
\[
\cF^{a}_{m|2n}= \exp{  \frac{i a \pi}{4}(\Delta - x^2 - M)},
\]
where $a \in [-1,1]$. The case $a = \pm 1$ is the Fourier transform studied in section \ref{gentransform}.

It is easy to see that this transform acts on the basis $\phi_{j,k,l}(x)$ as follows:
\[
\cF^{a}_{m|2n} (\phi_{j,k,l}(x)) = e^{  i a (j+k)\frac{\pi}{2}} \phi_{j,k,l}(x).
\]
The fractional Fourier transform thus rotates the basic functions over a multiple of the angle $\alpha = a \pi /2$.

The following theorem is easily proven.
\begin{theorem}
The fractional Fourier transform satisfies
\[
\cF^{a+b}_{m|2n} = \cF^{a}_{m|2n} \circ \cF^{b}_{m|2n} =  \cF^{b}_{m|2n} \circ \cF^{a}_{m|2n}.
\]
\end{theorem}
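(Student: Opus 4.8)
The plan is to verify the identity on the eigenbasis and then extend by linearity, reducing everything to the fact that scalar eigenvalues multiply. Recall from the discussion immediately preceding the theorem that, under the standing assumption $M \not\in -2\mN$, the functions $\phi_{j,k,l}(x)$ form a basis of $\cS(\mR^m)_{m|2n}$ and that the fractional Fourier transform acts diagonally on them, namely
\[
\cF^{a}_{m|2n}(\phi_{j,k,l}(x)) = e^{i a (j+k)\frac{\pi}{2}} \phi_{j,k,l}(x).
\]
Because each $\phi_{j,k,l}$ is an eigenfunction with a \emph{scalar} eigenvalue, the composition of two fractional transforms can be read off directly.

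First I would apply $\cF^{b}_{m|2n}$ and then $\cF^{a}_{m|2n}$ to a single basis element, invoking the eigenvalue relation twice:
\[
\cF^{a}_{m|2n}\circ\cF^{b}_{m|2n}(\phi_{j,k,l}) = \cF^{a}_{m|2n}\left( e^{i b (j+k)\frac{\pi}{2}} \phi_{j,k,l}\right) = e^{i b (j+k)\frac{\pi}{2}} e^{i a (j+k)\frac{\pi}{2}} \phi_{j,k,l} = e^{i (a+b)(j+k)\frac{\pi}{2}} \phi_{j,k,l}.
\]
By the eigenvalue relation with parameter $a+b$, the right-hand side is precisely $\cF^{a+b}_{m|2n}(\phi_{j,k,l})$. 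Since the two scalar factors commute, interchanging the roles of $a$ and $b$ yields the identical result, which simultaneously gives $\cF^{b}_{m|2n}\circ\cF^{a}_{m|2n} = \cF^{a+b}_{m|2n}$ on the basis and thereby the commutativity.

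Finally I would extend the identity from the basis to all of $\cS(\mR^m)_{m|2n}$ by linearity: the three operators $\cF^{a}_{m|2n}$, $\cF^{b}_{m|2n}$ and $\cF^{a+b}_{m|2n}$ are all linear and agree on a spanning set, hence agree everywhere. I do not expect a genuine obstacle here; the only point deserving attention is the hypothesis $M \not\in -2\mN$, which is what guarantees that the $\phi_{j,k,l}$ actually span the space. Under that hypothesis the computation is purely algebraic on each basis vector, so no convergence questions arise. The essential content has already been packaged into the diagonal action of $\cF^{a}_{m|2n}$, and the additivity of $a$ and $b$ in the exponent is nothing more than the additivity of exponents of a fixed scalar.
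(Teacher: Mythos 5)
Your proposal is correct and follows exactly the route the paper intends: the paper states the theorem immediately after deriving the diagonal action $\cF^{a}_{m|2n}(\phi_{j,k,l}) = e^{ia(j+k)\pi/2}\phi_{j,k,l}$ and dismisses the proof as ``easily proven,'' the tacit argument being precisely your eigenvalue multiplication on the Clifford--Hermite basis (equivalently, the one-parameter group property of the operator exponential $\exp\left(\frac{ia\pi}{4}(\Delta - x^2 - M)\right)$). Your attention to the hypothesis $M \not\in -2\mN$, which guarantees that the $\phi_{j,k,l}$ span, matches the paper's own standing assumption, so there is nothing to add.
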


The inverse of the fractional Fourier transform is then immediately obtained:
\[
\cF^{a}_{m|2n} \circ \cF^{-a}_{m|2n} = \mbox{id}_{\cS(\mR^m)_{m|2n}} =   \cF^{-a}_{m|2n} \circ \cF^{a}_{m|2n}.
\]

Now we construct an integral representation of the fractional Fourier transform. In the following lemma we study the fermionic case with only two variables.
\begin{lemma}
The fractional Fourier transform in two anti-commuting variables has the following integral representation:
\begin{equation}
\cF^{a}_{0|2} =  \pi (1- e^{2i \alpha})\int_{B,x} \exp{\frac{ 2 e^{i \alpha} ({y \grave{}}_{2}{x \grave{}}_{1} - {y \grave{}}_{1} {x \grave{}}_{2}) + (1+ e^{2i \alpha})({x \grave{}}_{1}{x \grave{}}_{2} + {y \grave{}}_{1}{y \grave{}}_{2})}{2- 2e^{2i \alpha}}}
\label{oddfracftrafo}
\end{equation}
with $\alpha = a \pi/2$.
\label{lemmafermfracfourier}
\end{lemma}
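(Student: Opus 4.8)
The plan is to check the claimed identity by evaluating both sides as operators on the four-dimensional Grassmann algebra $\Lambda_2$ (here $m=0$, $n=1$, so the super-dimension is $M=-2$ and $\uxb^2 = {x \grave{}}_{1}{x \grave{}}_{2}$). First I would record the left-hand side. By the action of $\cF^a_{m|2n}$ on the scalar Clifford-Hermite basis $\psi_{j,k,l}$ established above, namely eigenvalue $e^{i(2j+k)\alpha}$ with $\alpha = a\pi/2$, this basis reduces in dimension $0|2$ to the four functions $\exp(\pm\uxb^2/2) = 1 \pm \tfrac12{x \grave{}}_{1}{x \grave{}}_{2}$ and the two degree-one monomials ${x \grave{}}_{1}, {x \grave{}}_{2}$, carrying eigenvalues $1$, $e^{2i\alpha}$ and $e^{i\alpha}, e^{i\alpha}$ respectively. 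Equivalently, on the monomial basis $\{1, {x \grave{}}_{1}, {x \grave{}}_{2}, {x \grave{}}_{1}{x \grave{}}_{2}\}$ one reads off explicit images such as $\cF^a_{0|2}(1) = \tfrac{1+e^{2i\alpha}}{2} + \tfrac{1-e^{2i\alpha}}{4}{y \grave{}}_{1}{y \grave{}}_{2}$ and $\cF^a_{0|2}({x \grave{}}_{1}) = e^{i\alpha}{y \grave{}}_{1}$. It then suffices to show that the right-hand integral operator, call it $T$, reproduces these four images.

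The core computation is to expand the exponential kernel, which truncates because we are in a finite Grassmann algebra. Writing $c = (2-2e^{2i\alpha})^{-1}$, I would split the exponent as $E = E_1 + E_2$, with $E_1 = 2ce^{i\alpha}({y \grave{}}_{2}{x \grave{}}_{1} - {y \grave{}}_{1}{x \grave{}}_{2})$ the part linear in the $x$-variables and $E_2 = c(1+e^{2i\alpha})({x \grave{}}_{1}{x \grave{}}_{2} + {y \grave{}}_{1}{y \grave{}}_{2})$ the purely quadratic part. Both $E_1$ and $E_2$ are even, hence central in the supercommutative algebra, so $e^E = e^{E_1}e^{E_2}$; moreover $E_1^3 = E_2^3 = 0$, so each factor is a quadratic polynomial. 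A short calculation gives $E_1^2 = -8c^2 e^{2i\alpha}\,{x \grave{}}_{1}{x \grave{}}_{2}{y \grave{}}_{1}{y \grave{}}_{2}$ and $e^{E_2} = (1 + c(1+e^{2i\alpha}){x \grave{}}_{1}{x \grave{}}_{2})(1 + c(1+e^{2i\alpha}){y \grave{}}_{1}{y \grave{}}_{2})$.

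Next I would collect $e^E$ by degree in the $x$-variables as $e^E = A(y) + B_1(y){x \grave{}}_{1} + B_2(y){x \grave{}}_{2} + C(y){x \grave{}}_{1}{x \grave{}}_{2}$, with the coefficients lying in the $y$-copy of $\Lambda_2$, and then apply the Berezin integral in $x$, which extracts $\pi^{-1}$ times the coefficient of ${x \grave{}}_{1}{x \grave{}}_{2}$. Incorporating the prefactor $\pi(1-e^{2i\alpha}) = \tfrac{1}{2c}\pi$, the operator $T$ becomes the explicit map sending $1 \mapsto \tfrac{1}{2c}C(y)$, ${x \grave{}}_{1}{x \grave{}}_{2} \mapsto \tfrac{1}{2c}A(y)$, ${x \grave{}}_{1} \mapsto -\tfrac{1}{2c}B_2(y)$ and ${x \grave{}}_{2} \mapsto \tfrac{1}{2c}B_1(y)$, the signs arising from ${x \grave{}}_{2}{x \grave{}}_{1} = -{x \grave{}}_{1}{x \grave{}}_{2}$ when a trailing $x$-generator is pulled through to form the top monomial. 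Comparing with the four images of $\cF^a_{0|2}$ then collapses to the two scalar identities $\tfrac{1}{2c} = 1-e^{2i\alpha}$ and $c^2(1+e^{2i\alpha})^2 - 4c^2 e^{2i\alpha} = c^2(1-e^{2i\alpha})^2$, after which every coefficient matches.

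The bulk of the work, and essentially the only place errors can enter, is the Grassmann bookkeeping: tracking signs when moving $x$-generators past $y$-generators, when forming $E_1^2$, and when pulling a residual ${x \grave{}}_{1}$ or ${x \grave{}}_{2}$ through $e^E$ under the Berezin integral. Once $e^E$ has been organised by $x$-degree as above, the remaining verification is the purely algebraic coefficient-matching governed by the two displayed identities, which is routine.
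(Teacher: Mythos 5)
Your proposal is correct and takes essentially the same approach as the paper: both proofs verify the identity on the four-dimensional space $\Lambda_2$ by computing the four images of $1$, ${x \grave{}}_{1}$, ${x \grave{}}_{2}$, ${x \grave{}}_{1}{x \grave{}}_{2}$ under $\cF^{a}_{0|2}$ (the paper by summing the series for $\exp(i \alpha H)$ with $H = 2 \partial_{{x\grave{}}_{1}} \partial_{{x\grave{}}_{2}} -{x \grave{}}_{1}{x \grave{}}_{2}/2 +1$ on the monomial basis, you via the eigenfunctions $\exp(\pm \uxb^2/2)$, ${x \grave{}}_{1}$, ${x \grave{}}_{2}$) and then matching against the Berezin-integral operator, where the paper fixes an undetermined kernel $F(x,y)$ by these four conditions and leaves the expansion of the exponential as a final check, while you expand the exponential directly --- your truncation $e^E = e^{E_1}e^{E_2}$ with $E_1^2 = -8c^2 e^{2i\alpha} {x \grave{}}_{1}{x \grave{}}_{2}{y \grave{}}_{1}{y \grave{}}_{2}$ and the two scalar identities are all verified correct. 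One small caution: your citation of the $\psi_{j,k,l}$ basis result formally carries the hypothesis $M \not\in -2\mN$, which fails here since $M = -2$; this is harmless because the oscillator eigenvalue relation holds for your four explicit functions independently of that hypothesis and they visibly span $\Lambda_2$, but it is worth stating, since the paper's own proof avoids the issue entirely by computing $H^k$ on monomials.
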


\begin{proof}
By definition we have that
\[
\cF^{a}_{0|2} = \exp{ i \alpha H}
\]
with $\alpha = a \pi /2$ and with $H = 2 \partial_{{x\grave{}}_{1}} \partial_{{x\grave{}}_{2}} -{x \grave{}}_{1}{x \grave{}}_{2}/2 +1$. This operator $\cF^{a}_{0|2}$ acts on $\Lambda_2 = \mbox{span} (1 ,{x \grave{}}_{1} ,{x \grave{}}_{2} , {x \grave{}}_{1}{x \grave{}}_{2})$. Calculating the iterated action of $H$ on this basis yields
\[
\begin{array}{lll}
H^k (1) &=& \frac{1}{4} 2^k (2- {x \grave{}}_{1}{x \grave{}}_{2})\\
H^k ({x \grave{}}_{1}) &=& {x \grave{}}_{1}\\
H^k ({x \grave{}}_{2}) &=& {x \grave{}}_{2}\\
H^k ({x \grave{}}_{1}{x \grave{}}_{2}) &=& -\frac{1}{2} 2^k (2- {x \grave{}}_{1}{x \grave{}}_{2}).
\end{array}
\]
Using these results we find that the fractional Fourier transform acts on this basis as
\[
\begin{array}{lll}
\cF^{a}_{0|2} (1) &=& \frac{1}{2} (1+ e^{2i \alpha}) +  \frac{1}{4} (1- e^{2i \alpha}) {x \grave{}}_{1}{x \grave{}}_{2}\\
\cF^{a}_{0|2} ({x \grave{}}_{1}) &=& e^{i \alpha}{x \grave{}}_{1}\\
\cF^{a}_{0|2} ({x \grave{}}_{2}) &=& e^{i \alpha}{x \grave{}}_{2}\\
\cF^{a}_{0|2}({x \grave{}}_{1}{x \grave{}}_{2}) &=& (1- e^{2i \alpha}) +  \frac{1}{2} (1+ e^{2i \alpha}) {x \grave{}}_{1}{x \grave{}}_{2}.
\end{array}
\]
Requiring the integral operator
\[
\int_{B,x} F(x,y)
\]
with
\[
F(x,y) = \pi \left( f_0(y) + f_1(y) {x \grave{}}_{1} + f_2(y) {x \grave{}}_{2} + f_{12}(y) {x \grave{}}_{1} {x \grave{}}_{2} \right)
\]
and where $f_0(y),f_1(y),f_2(y)$ and $f_{12}(y)$ are still to be determined, to equal $\cF^{a}_{0|2}$ when acting on $\Lambda_2$, we obtain that
\[
F(x,y) = \pi \left( (1- e^{2i \alpha}) + \frac{1}{2}(1+ e^{2i \alpha}){y \grave{}}_{1} {y \grave{}}_{2}  + e^{i \alpha}[{y \grave{}}_{2} {x \grave{}}_{1} -{y \grave{}}_{1} {x \grave{}}_{2}] + [ \frac{1}{2}(1+ e^{2i \alpha}) + \frac{1}{4}(1- e^{2i \alpha}){y \grave{}}_{1} {y \grave{}}_{2} ]{x \grave{}}_{1} {x \grave{}}_{2} \right).
\]
It is now easy to check that expanding the kernel given in formula (\ref{oddfracftrafo}) yields the same result.
\end{proof}

Using this lemma we obtain the following integral representation of the general fractional Fourier transform.
\begin{theorem}
One has that
\[
\cF^{a}_{m|2n}= \left(\pi (1- e^{2i \alpha})\right)^{-M/2} \int_{\mR^m}  dV(\ux) \int_{B,x}  \exp{\frac{ - 4 e^{i \alpha} \langle x , y \rangle + (1+ e^{2i \alpha})(x^2 + y^2)}{2- 2e^{2i \alpha}}}. 
\]
\end{theorem}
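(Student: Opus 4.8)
The plan is to factor the problem into the two pieces that are already understood, namely the classical (bosonic) fractional Fourier transform and the two--variable fermionic transform of Lemma \ref{lemmafermfracfourier}, and then to reassemble the kernel. The starting point is the operator exponential form $\cF^{a}_{m|2n}= \exp{ \frac{i a \pi}{4}(\Delta - x^2 - M)}$. Since $\Delta = \Delta_b + \Delta_f$, $x^2 = \ux^2 + \uxb^2$ and $M = m - 2n$, I would first write
\[
\frac{i a \pi}{4}(\Delta - x^2 - M) = \frac{i a \pi}{4}(\Delta_b - \ux^2 - m) + \frac{i a \pi}{4}(\Delta_f - \uxb^2 + 2n).
\]
The two summands involve disjoint sets of (bosonic resp. fermionic) variables and hence commute, so $\cF^{a}_{m|2n} = \cF^{a}_{m|0} \circ \cF^{a}_{0|2n}$, with the two factors acting independently.

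Next I would treat the fermionic factor by splitting it over the $n$ symplectic pairs:
\[
\frac{i a \pi}{4}(\Delta_f - \uxb^2 + 2n) = \sum_{j=1}^{n} \frac{i a \pi}{4}\left( 4 \partial_{{x \grave{}}_{2j-1}} \partial_{{x \grave{}}_{2j}} - {x \grave{}}_{2j-1}{x \grave{}}_{2j} + 2 \right).
\]
Each summand is a Grassmann--even operator in the variables of a single pair, so these commute and $\cF^{a}_{0|2n} = \prod_{j=1}^{n} \cF^{a}_{0|2,(j)}$, where $\cF^{a}_{0|2,(j)}$ is the transform of Lemma \ref{lemmafermfracfourier} applied to the pair $({x \grave{}}_{2j-1},{x \grave{}}_{2j})$; indeed $\frac{i a \pi}{4}\cdot 2 = i\alpha$ recovers the operator $i\alpha H$ of the lemma. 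Multiplying the $n$ kernels of (\ref{oddfracftrafo}) and using that $\int_{B,x}$ factorizes over the pairs, I obtain an integral representation of $\cF^{a}_{0|2n}$ whose exponent is the sum of the pair exponents and whose prefactor is $(\pi(1-e^{2i\alpha}))^{n}$. Relabelling $1\mapsto 2j-1$, $2\mapsto 2j$ and using ${y \grave{}}_{2j}{x \grave{}}_{2j-1} - {y \grave{}}_{2j-1}{x \grave{}}_{2j} = -({x \grave{}}_{2j-1}{y \grave{}}_{2j} - {x \grave{}}_{2j}{y \grave{}}_{2j-1})$, the summed exponent is precisely $-4 e^{i\alpha}$ times the fermionic part of $\langle x,y \rangle$ plus $(1+e^{2i\alpha})$ times the fermionic parts of $x^2$ and $y^2$, all divided by $2 - 2 e^{2i\alpha}$.

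For the bosonic factor $\cF^{a}_{m|0}$ I would invoke the classical Mehler--kernel integral representation of the fractional Fourier transform (see \cite{OZA}) and check that its $m$--dimensional kernel matches the bosonic part of the claimed exponent, namely $\frac{4 e^{i\alpha}\sum x_i y_i - (1+e^{2i\alpha})\sum(x_i^2 + y_i^2)}{2 - 2 e^{2i\alpha}}$ (recall $\ux^2 = -\sum x_i^2$ and the bosonic part of $\langle x,y \rangle$ equals $-\sum x_i y_i$), with prefactor $(\pi(1-e^{2i\alpha}))^{-m/2}$. As a consistency check, letting $\alpha \to \pi/2$ collapses this exponent to $i\sum x_i y_i$ and the prefactor to $(2\pi)^{-m/2}$, which recovers $\cF^{+}_{m|0}$. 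Composing $\cF^{a}_{m|0}$ with $\cF^{a}_{0|2n}$ then produces a single operator $\int_{\mR^m} dV(\ux) \int_{B,x}$ of the product of the two kernels; since $\langle x,y \rangle$, $x^2$ and $y^2$ each split as bosonic plus fermionic with even, disjoint--variable parts, the product of kernels equals $\exp$ of the sum of the two exponents, which is the claimed combined exponent, while the prefactors multiply to $(\pi(1-e^{2i\alpha}))^{-m/2}(\pi(1-e^{2i\alpha}))^{n} = (\pi(1-e^{2i\alpha}))^{-M/2}$.

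The main obstacle is the fermionic bookkeeping in the middle step: one must verify that the product of the $n$ single--pair kernels, together with the factorization $\int_{B,x} = \pi^{-n}\partial_{{x \grave{}}_{2n}} \ldots \partial_{{x \grave{}}_{1}}$, reproduces the fermionic kernel with the correct overall sign and prefactor. This requires tracking the anticommutation signs when reordering Grassmann factors and confirming that each pair--kernel is Grassmann--even, so that the factors genuinely commute and no spurious sign is introduced; the remaining bosonic matching is entirely classical.
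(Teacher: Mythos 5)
Your proposal is correct and follows essentially the same route as the paper's proof: the paper likewise starts from the operator exponential $\exp\left(\frac{ia\pi}{4}(\Delta - x^2 - M)\right)$, splits it into the commuting bosonic and fermionic factors, factors the fermionic part over the $n$ symplectic pairs so that Lemma \ref{lemmafermfracfourier} applies to each, and combines this with the classical integral representation of $\cF^{a}_{m|0}$ from \cite{OZA}. The only difference is that you spell out the sign bookkeeping, the relabelling ${y\grave{}}_{2j}{x\grave{}}_{2j-1}-{y\grave{}}_{2j-1}{x\grave{}}_{2j} = -({x\grave{}}_{2j-1}{y\grave{}}_{2j}-{x\grave{}}_{2j}{y\grave{}}_{2j-1})$, and the prefactor count $(\pi(1-e^{2i\alpha}))^{-m/2+n}=(\pi(1-e^{2i\alpha}))^{-M/2}$, which the paper leaves implicit in its closing sentence.
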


\begin{proof}
By definition we have that
\[
\cF^{a}_{m|2n}= \exp{  i \alpha/2(\Delta - x^2 - M)}.
\]
We can evaluate this exponential as
\begin{eqnarray*}
\cF^{a}_{m|2n} &=& \exp{ i \alpha/2(\Delta_b  - \ux^2 - m)} \exp{ i \alpha/2(\Delta_f  - \uxb^2 +2n)}\\
&=& \cF^{a}_{m|0}  \exp{ i \alpha(4 \sum_j \partial_{{x\grave{}}_{2j-1}} \partial_{{x\grave{}}_{2j}}  - \frac{1}{2} \sum_j {x \grave{}}_{2j-1} {x \grave{}}_{2j} +2n)}\\
&=&\cF^{a}_{m|0}  \prod_j \exp{ i \alpha(2 \partial_{{x\grave{}}_{2j-1}} \partial_{{x\grave{}}_{2j}}  - \frac{1}{2} {x \grave{}}_{2j-1} {x \grave{}}_{2j} +1)}.
\end{eqnarray*}
The integral representation of the bosonic fractional Fourier transform is given by (see \cite{OZA})
\[
\cF^{a}_{m|0} =  \left(\pi (1- e^{2i \alpha})\right)^{-m/2} \int_{\mR^m}  dV(\ux)  \exp{\frac{ -4 e^{i \alpha} \langle \ux , \uy \rangle + (1+ e^{2i \alpha})(\ux^2 + \uy^2)}{2- 2e^{2i \alpha}}}.
\]
Combining this formula with lemma \ref{lemmafermfracfourier} gives the desired result.
\end{proof}

\begin{remark}
Note again that the kernel of the fractional Fourier transform is symmetric: $K(x,y) = K(y,x)$. Moreover, it would be impossible to introduce a fractional power of the Fourier transform in fermionic space using an orthogonal kernel. Indeed, if so we would not dispose of a Laplace operator nor of a generalization of the norm squared of a vector since ${x \grave{}}_i^2 = 0 = \partial_{{x\grave{}}_{i}}^2$.
\end{remark}

\begin{remark}
Note that we could also study a slightly more general transform, where we choose $\alpha$ independently for each commuting variable and each pair of anti-commuting variables.
\end{remark}

In the following lemma we give the basic calculus properties of the fractional Fourier transform.
\begin{lemma}
If $g \in \cS(\mR^m)_{m|2n}$, then the following relations hold:
\[
\begin{array}{lll}
\cF_{m| 2n}^{a} (\partial_{x_i} g) & = &[ \cos{\alpha} \partial_{y_i} - i \sin{\alpha} y_i ] \cF_{m | 2n}^{a} ( g) \\
\vspace{-3mm}\\
\cF_{m| 2n}^{a} (\partial_{{x\grave{}}_{2i}} g) & = & [ \cos{\alpha} \partial_{{y\grave{}}_{2i}} -  \frac{i}{2} \sin{\alpha} {y \grave{}}_{2i-1} ] \cF_{m | 2n}^{a} ( g)\\
\vspace{-3mm}\\
\cF_{m | 2n}^{a} (\partial_{{x\grave{}}_{2i-1}} g) & = & [ \cos{\alpha} \partial_{{y\grave{}}_{2i-1}} +  \frac{i}{2} \sin{\alpha} {y \grave{}}_{2i} ] \cF_{m | 2n}^{a} ( g)\\
\vspace{-3mm}\\
\cF_{m| 2n}^{a} (x_i g) & = & [- i \sin{\alpha} \partial_{y_i} + \cos{\alpha} y_i ] \cF_{m | 2n}^{a} ( g)\\
\vspace{-3mm}\\
\cF_{m | 2n}^{a} ({x\grave{}}_{2i} g) & = & [ 2i\sin{\alpha} \partial_{{y\grave{}}_{2i-1}} +  \cos{\alpha} {y \grave{}}_{2i} ] \cF_{m | 2n}^{a} ( g)\\
\vspace{-3mm}\\
\cF_{m| 2n}^{a} ({x\grave{}}_{2i-1} g) & = &[- 2i\sin{\alpha} \partial_{{y\grave{}}_{2i}} +  \cos{\alpha} {y \grave{}}_{2i-1} ] \cF_{m | 2n}^{a} ( g).
\end{array}
\]
As a consequence one has that
\[
\cF_{m| 2n}^{a} \left( (\px + x) g \right) = e^{i \alpha} (\px + x) \cF_{m| 2n}^{a} \left(  g \right).
\]
\label{fracfouriercalculus}
\end{lemma}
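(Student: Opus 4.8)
The plan is to exploit the operator--exponential form $\cF^{a}_{m|2n}=\exp(i\alpha A)$, where $\alpha=a\pi/2$ and $A=\tfrac12(\Delta-x^2-M)$, and to read off every identity from the adjoint action of $A$ on the six first--order generators. Since $\Delta$, $x^2$ and $M$ are all even operators, $A$ is even, so its bracket with any (possibly odd) generator is the ordinary commutator and the standard identity $e^{X}De^{-X}=e^{\mathrm{ad}_X}D$ applies with $X=i\alpha A$. Thus each asserted relation $\cF^{a}_{m|2n}(Dg)=[\,\cdots]\cF^{a}_{m|2n}(g)$ is equivalent to the single operator identity
\[
\cF^{a}_{m|2n}\,D\,(\cF^{a}_{m|2n})^{-1}=e^{\,i\alpha\,\mathrm{ad}_A}D,
\]
with $D$ ranging over $\partial_{x_i},x_i,\partial_{{x \grave{}}_{2i}},\partial_{{x \grave{}}_{2i-1}},{x \grave{}}_{2i},{x \grave{}}_{2i-1}$, after renaming the $x$--variables to $y$--variables on the right.

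The first step is to compute the basic commutators. Using $[\partial_{x_i},x_j]=\delta_{ij}$ together with the Grassmann rules $\partial_{{x \grave{}}_j}{x \grave{}}_k+{x \grave{}}_k\partial_{{x \grave{}}_j}=\delta_{jk}$ and $\partial_{{x \grave{}}_j}\partial_{{x \grave{}}_k}+\partial_{{x \grave{}}_k}\partial_{{x \grave{}}_j}=0$, a direct calculation gives
\[
\mathrm{ad}_A\,\partial_{x_i}=-x_i,\qquad \mathrm{ad}_A\,x_i=-\partial_{x_i},
\]
\begin{align*}
\mathrm{ad}_A\,\partial_{{x \grave{}}_{2i}}&=-\tfrac12{x \grave{}}_{2i-1}, & \mathrm{ad}_A\,{x \grave{}}_{2i-1}&=-2\,\partial_{{x \grave{}}_{2i}},\\
\mathrm{ad}_A\,\partial_{{x \grave{}}_{2i-1}}&=\tfrac12{x \grave{}}_{2i}, & \mathrm{ad}_A\,{x \grave{}}_{2i}&=2\,\partial_{{x \grave{}}_{2i-1}}.
\end{align*}
Here only the $-x^2$ part of $A$ contributes for the derivative generators (the constant--coefficient $\Delta$ commutes with them), and only $\Delta$ contributes for the variable generators (the coordinates commute with $x^2$); the factors $\tfrac12$ and $2$ and the symplectic signs emerge from $\Delta_f=4\sum_j\partial_{{x \grave{}}_{2j-1}}\partial_{{x \grave{}}_{2j}}$ and from the fermionic part $\sum_j{x \grave{}}_{2j-1}{x \grave{}}_{2j}$ of $x^2$.

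The second step is the key observation that these brackets organise the generators into three decoupled pairs, $\{\partial_{x_i},x_i\}$, $\{\partial_{{x \grave{}}_{2i}},{x \grave{}}_{2i-1}\}$ and $\{\partial_{{x \grave{}}_{2i-1}},{x \grave{}}_{2i}\}$, and that on each pair $\mathrm{ad}_A^2=\mathrm{id}$ (e.g.\ $\mathrm{ad}_A^2\,\partial_{{x \grave{}}_{2i}}=\mathrm{ad}_A(-\tfrac12{x \grave{}}_{2i-1})=\partial_{{x \grave{}}_{2i}}$). Hence the exponential series truncates after the linear term,
\[
e^{\,i\alpha\,\mathrm{ad}_A}=\cos\alpha\;\mathrm{id}+i\sin\alpha\;\mathrm{ad}_A,
\]
on each such two--dimensional space; substituting the commutators above and renaming $x\to y$ reproduces exactly the six stated formulas. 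Because of the truncation there is no convergence issue. (Equivalently one may solve the Heisenberg system $\tfrac{d}{dt}D(t)=i[A,D(t)]$ for $D(t)=e^{itA}De^{-itA}$, which is a harmonic pair $D''(t)=-D(t)$ evaluated at $t=\alpha$.) For the concluding identity one simply assembles the six relations: writing $\px+x$ in coordinates, each bosonic contribution $e_i(x_i-\partial_{x_i})$ and each fermionic contribution (such as the coefficient $2\partial_{{x \grave{}}_{2j-1}}+{x \grave{}}_{2j}$ of ${e \grave{}}_{2j}$) transforms by the common factor $\cos\alpha+i\sin\alpha=e^{i\alpha}$, giving $\cF^{a}_{m|2n}((\px+x)g)=e^{i\alpha}(\px+x)\cF^{a}_{m|2n}(g)$; alternatively this follows at once from the ladder action $(\px+x)\phi_{j,k,l}=\phi_{j+1,k,l}$ combined with $\cF^{a}_{m|2n}\phi_{j,k,l}=e^{i\alpha(j+k)}\phi_{j,k,l}$.

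I expect the main obstacle to be the bookkeeping in the fermionic commutators: producing the factors $\tfrac12$ versus $2$ and the correct symplectic signs requires careful use of the anticommutation relations for the ${x \grave{}}_j$ and $\partial_{{x \grave{}}_j}$, and one must verify in each case that $\mathrm{ad}_A^2=\mathrm{id}$, which is precisely what forces the clean $\cos/\sin$ form. Once the three pairs and their brackets are correctly identified, the remainder is routine.
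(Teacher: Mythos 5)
Your proposal is correct, but it takes a genuinely different route from the paper. The paper disposes of the bosonic relations by citing the classical fractional Fourier literature \cite{OZA}, and proves the fermionic ones by brute force: it lets $\cF^{a}_{0|2}$ act explicitly on ${x \grave{}}_{i}g$ and $\partial_{{x \grave{}}_{i}}g$ for $g$ ranging over the four-dimensional space $\Lambda_2$, using the explicit action of $\cF^{a}_{0|2}$ computed in the proof of lemma \ref{lemmafermfracfourier}. You instead work directly from the defining operator exponential $\cF^{a}_{m|2n}=\exp(i\alpha A)$ with $A=\tfrac12(\Delta-x^2-M)$, compute $\mathrm{ad}_A$ on the six generators, observe that $\mathrm{ad}_A^2=\mathrm{id}$ on each conjugate pair so that the adjoint series truncates to $\cos\alpha\,\mathrm{id}+i\sin\alpha\,\mathrm{ad}_A$, and rename $x\to y$. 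I checked your commutators, including the fermionic ones where the factors $\tfrac12$ versus $2$ and the symplectic signs must come out of $\Delta_f=4\sum_j\partial_{{x \grave{}}_{2j-1}}\partial_{{x \grave{}}_{2j}}$ and $-\sum_j {x \grave{}}_{2j-1}{x \grave{}}_{2j}$; they all agree with the stated formulas, and your parity observation (that $A$ is even, so ordinary commutators suffice even for odd generators) is exactly right. What your approach buys: a single uniform computation covering both sectors, no external citation, no case-by-case verification on $\Lambda_2$, and a transparent derivation of the equivariance $\cF^{a}_{m|2n}((\px+x)g)=e^{i\alpha}(\px+x)\cF^{a}_{m|2n}(g)$, since each of the combinations $x_i-\partial_{x_i}$, $2\partial_{{x \grave{}}_{2j-1}}+{x \grave{}}_{2j}$ and ${x \grave{}}_{2j-1}-2\partial_{{x \grave{}}_{2j}}$ picks up the common factor $\cos\alpha+i\sin\alpha$. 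What the paper's approach buys is a cross-check against the integral kernel of formula (\ref{oddfracftrafo}), which your argument bypasses entirely. Two small cautions: the identity $e^{i\alpha A}De^{-i\alpha A}=e^{i\alpha\,\mathrm{ad}_A}D$ for the unbounded $A$ is formal as written, but your truncation plus the Heisenberg-ODE remark (or verification on the eigenbasis $\phi_{j,k,l}$, through which the paper actually defines the exponential) closes it at a level of rigor matching the paper's own; and your alternative derivation of the final identity via the ladder action on $\phi_{j,k,l}$ silently requires $M\not\in-2\mN$ for these functions to form a basis of $\cS(\mR^m)_{m|2n}$, whereas the direct assembly of the six relations does not, so the direct version should be kept as the primary argument.
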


\begin{proof}
The formulae for $x_i$ and $\partial_{x_i}$ can be found in \cite{OZA}.

The formulae for ${x\grave{}}_{i}$ and $\partial_{{x\grave{}}_{i}}$ follow by explicitly computing the action of $\cF_{0| 2}^{a}$ on ${x\grave{}}_{i} g$ and on $\partial_{{x\grave{}}_{i}} g$ with $g \in \Lambda_2$.

Note that in the case where $\alpha = \pm \pi/2$ we reobtain lemma \ref{superfouriercalculus}.
\end{proof}

\section{The Radon transform in superspace}
\label{RadonTransform}

The classical Radon transform in $\mR^m$ is defined by
\[
\cR_{m|0} (f)(\underline{\omega}, p) = \int_{\mR^m}\delta(\ux \cdot \underline{\omega} - p) f(\ux) dV(\ux).
\]
It is a transform which maps a function defined on $\mR^m$ to the integral of this function over all hyperplanes in $\mR^m$, i.e. to a function defined on the half cilinder $\mS^{m-1}\times \mR^+$ with co-ordinates $( \underline{\omega},p)$. For the mathematical theory of the Radon transform we refer the reader to e.g. \cite{MR1723736}. A nice overview of its properties and applications can be found in \cite{MR709591}.

In this section we will use the connection between the Radon transform and the Fourier transform for its definition in superspace. This connection is expressed by the following theorem (see \cite{MR1723736}).
\begin{theorem}[central-slice]
One has that
\[
\cR_{m|0} (f)(\underline{\omega}, p) = (2 \pi)^{\frac{m}{2} -1} \int_{-\infty}^{\infty} e^{ipr} \cF_{m| 0}^{-}(f)(r \underline{\omega}) dr.
\]
\end{theorem}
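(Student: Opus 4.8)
The plan is to prove the central-slice theorem by starting from the right-hand side, substituting the definition of the bosonic Fourier transform $\cF_{m|0}^{-}$, and reorganizing the resulting double integral so that the inner $r$-integration collapses into the one-dimensional Dirac distribution $\delta(\ux \cdot \underline{\omega} - p)$ that defines the Radon transform. The essential mechanism is the classical Fourier representation of the delta distribution, $\delta(t) = (2\pi)^{-1} \int_{-\infty}^{\infty} e^{irt}\, dr$, applied to $t = \ux \cdot \underline{\omega} - p$.

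Concretely, I would first write out
\[
(2 \pi)^{\frac{m}{2} -1} \int_{-\infty}^{\infty} e^{ipr} \cF_{m| 0}^{-}(f)(r \underline{\omega}) \, dr = (2 \pi)^{\frac{m}{2} -1} (2\pi)^{-m/2}\int_{-\infty}^{\infty} e^{ipr} \int_{\mR^m} dV(\ux)\, e^{-i r \langle \ux, \underline{\omega}\rangle_{\mR^m}} f(\ux)\, dr,
\]
where evaluation of $\cF_{m|0}^{-}(f)$ at the point $r\underline{\omega}$ produces in the exponent the bilinear pairing $\sum_i x_i (r\omega_i) = r (\ux \cdot \underline{\omega})$. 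Here I must be careful with the sign convention built into $\cF_{m|0}^{-}$: the minus variant contributes $\exp(-i r \ux \cdot \underline{\omega})$, so that combined with the factor $e^{ipr}$ the total $r$-dependence is $\exp\bigl(i r (p - \ux\cdot\underline{\omega})\bigr)$.

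Next I would interchange the order of integration (justified by Fubini for $f$ in a suitable function space, e.g. $\cS(\mR^m)$ or $L_1$) to bring the $r$-integral inside, giving
\[
(2 \pi)^{-1}\int_{\mR^m} dV(\ux)\, f(\ux) \int_{-\infty}^{\infty} e^{i r (p - \ux\cdot\underline{\omega})}\, dr = \int_{\mR^m} dV(\ux)\, f(\ux)\, \delta(p - \ux\cdot\underline{\omega}),
\]
where the constant prefactors $(2\pi)^{\frac{m}{2}-1}(2\pi)^{-m/2} = (2\pi)^{-1}$ exactly cancel the $(2\pi)$ absorbed into the delta representation. Using the evenness $\delta(p - \ux\cdot\underline{\omega}) = \delta(\ux\cdot\underline{\omega} - p)$ then recovers precisely $\cR_{m|0}(f)(\underline{\omega}, p)$, completing the argument.

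The main obstacle is not conceptual but bookkeeping: one must track the sign and normalization conventions through the composition carefully. In particular, the inner Fourier integral over $\mR^m$ uses the pairing $\sum_i x_i y_i$ (so at $y = r\underline{\omega}$ the exponent is linear in $r$), and verifying that the $r$-integral genuinely produces the full $2\pi\,\delta$ — rather than a rescaled version — is where an erroneous factor could slip in. A rigorous treatment also requires that the interchange of integration and the use of $\int e^{irt}\,dr = 2\pi\,\delta(t)$ be understood in the distributional sense, which is the standard setting for Radon and Fourier inversion and presents no genuine difficulty for test functions. I expect the author's proof to proceed by this same substitution-and-collapse route, so I would simply fill in these constants explicitly.
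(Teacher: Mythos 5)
Your computation is correct, but note that the paper contains no proof of this theorem to compare against: it is imported from the literature with ``see \cite{MR1723736}'' (Helgason), and the super Radon transform is then \emph{defined} through this identity. Your substitution-and-collapse argument is the standard derivation, and your constant bookkeeping checks out: with the paper's convention $\cF^{-}_{m|0}(f)(y) = (2\pi)^{-m/2}\int_{\mR^m} e^{-i\sum_i x_i y_i} f(\ux)\, dV(\ux)$, evaluation at $y = r\underline{\omega}$ together with the prefactor gives $(2\pi)^{\frac{m}{2}-1}(2\pi)^{-m/2} = (2\pi)^{-1}$, which is exactly absorbed by $\int_{-\infty}^{\infty} e^{ir(p-\ux\cdot\underline{\omega})}\, dr = 2\pi\,\delta(p-\ux\cdot\underline{\omega})$, and evenness of $\delta$ recovers $\cR_{m|0}(f)(\underline{\omega},p)$.

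One refinement is worth making explicit. Your appeal to Fubini for interchanging $\int dr$ with $\int dV(\ux)$ is not literally available: $|e^{ir(p-\ux\cdot\underline{\omega})}f(\ux)|$ is not jointly integrable in $(r,\ux)$, so both the interchange and the identity $\int e^{irt}\,dr = 2\pi\,\delta(t)$ only hold distributionally (e.g.\ after pairing with a test function in $p$, or via a regulator $e^{-\epsilon r^2}$ with $\epsilon \to 0$) --- you flag this caveat yourself, which is adequate. If you want a fully classical proof for $f \in \cS(\mR^m)$, run the argument in the opposite direction: for fixed $\underline{\omega}$, decomposing $\mR^m$ into the hyperplanes $\ux\cdot\underline{\omega} = p$ gives, by a genuinely absolutely convergent application of Fubini,
\[
\int_{-\infty}^{\infty} e^{-irp}\, \cR_{m|0}(f)(\underline{\omega},p)\, dp = \int_{\mR^m} e^{-ir\,\ux\cdot\underline{\omega}} f(\ux)\, dV(\ux) = (2\pi)^{m/2}\, \cF^{-}_{m|0}(f)(r\underline{\omega}),
\]
after which one-dimensional Fourier inversion in $p$ (legitimate, since $p \mapsto \cR_{m|0}(f)(\underline{\omega},p)$ is rapidly decreasing) yields precisely the stated formula with the factor $(2\pi)^{\frac{m}{2}-1}$. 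This is the same identity you prove, with every step justified without distributions.
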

This theorem says that the Radon transform of a function $f$ is obtained by first taking the Fourier transform of the function, followed by a second Fourier transform with respect to the radius $r$. As we have developed in the foregoing sections a general theory of Fourier transforms in superspace, we are able to use this relation as a definition of a super Radon transform. 

\begin{definition}
The super Radon transform of a function $f \in \cS(\mR^m)_{m|2n}$ is defined by
\[
\cR_{m|2n} (f) ( \omega, p) = (2 \pi)^{\frac{M}{2} -1} \int_{-\infty}^{\infty} e^{ipr} \left[\cF_{m| 2n}^{-}(f)(r \omega) \mod \omega^2 +1 \right]dr.
\]
\end{definition}
In this definition $\omega^2 =  \sum_{j=1}^n {\omega\grave{}}_{2j-1} {\omega\grave{}}_{2j}  -  \sum_{j=1}^m \omega_j^2 = -1$ is the algebraic relation defining the super-sphere (see \cite{DBS5}).
Also note that the transform is only well-defined if $m \neq 0$. There is no purely fermionic Radon transform.

This Radon transform has the following basic properties.
\begin{lemma}
If $f \in \cS(\mR^m)_{m|2n}$, then one has
\[
\begin{array}{lll}
\cR_{m| 2n} (\partial_{x_i} g) & = & \omega_i \frac{\partial}{\partial p}\cR_{m | 2n} ( g) \\
\vspace{-3mm}\\
\cR_{m| 2n} (\partial_{{x\grave{}}_{2i}} g) & = & \frac{1}{2} {\omega \grave{}}_{2i-1} \frac{\partial}{\partial p} \cR_{m | 2n} ( g)\\
\vspace{-3mm}\\
\cR_{m | 2n}(\partial_{{x\grave{}}_{2i-1}} g) & = & - \frac{1}{2} {\omega \grave{}}_{2i} \frac{\partial}{\partial p}\cR_{m | 2n} ( g).
\end{array}
\]
\end{lemma}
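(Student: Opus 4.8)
The plan is to unwind the definition of $\cR_{m|2n}$ and feed into it the action of the inverse Fourier transform on derivatives, which is already recorded in Lemma \ref{superfouriercalculus}. Taking the lower sign there gives $\cF_{m|2n}^{-}(\partial_{x_i} g)(y) = i y_i\, \cF_{m|2n}^{-}(g)(y)$, $\cF_{m|2n}^{-}(\partial_{{x \grave{}}_{2i}} g)(y) = \tfrac{i}{2}{y \grave{}}_{2i-1}\, \cF_{m|2n}^{-}(g)(y)$, and $\cF_{m|2n}^{-}(\partial_{{x \grave{}}_{2i-1}} g)(y) = -\tfrac{i}{2}{y \grave{}}_{2i}\, \cF_{m|2n}^{-}(g)(y)$. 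First I would substitute $y = r\omega$, i.e.\ $y_i \mapsto r\omega_i$ and ${y \grave{}}_j \mapsto r{\omega \grave{}}_j$, so that each of these relations acquires an explicit factor of $r$ multiplying a single coordinate of $\omega$ against $\cF_{m|2n}^{-}(g)(r\omega)$.

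The main computational device is the identity $i r\, e^{ipr} = \tfrac{\partial}{\partial p} e^{ipr}$. After inserting, say, $\cF_{m|2n}^{-}(\partial_{x_i} g)(r\omega) = i r\omega_i\, \cF_{m|2n}^{-}(g)(r\omega)$ into the definition, the factor $ir$ combines with $e^{ipr}$ to produce $\partial/\partial p$ acting on the kernel; since the coordinate $\omega_i$ is independent of both $r$ and $p$, it may be pulled outside the integral and past the derivative, yielding exactly $\omega_i\, \tfrac{\partial}{\partial p}\cR_{m|2n}(g)$. The two fermionic identities follow in the same way: the prefactor $\tfrac{i}{2}r$ (resp.\ $-\tfrac{i}{2}r$) against $e^{ipr}$ gives $\tfrac12\partial_p$ (resp.\ $-\tfrac12\partial_p$), reproducing the stated coefficients $\tfrac12{\omega \grave{}}_{2i-1}$ and $-\tfrac12{\omega \grave{}}_{2i}$. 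Keeping the extracted variable always on the left matches the placement of $y_i$ and ${y \grave{}}_j$ in Lemma \ref{superfouriercalculus}, so no stray anticommutation signs arise. The normalizing factor $(2\pi)^{M/2-1}$ is common to both sides and simply cancels.

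The one point that deserves care, and which I expect to be the only genuine obstacle, is the interaction of the reduction ``$\mod\ \omega^2 + 1$'' with multiplication by the extracted coordinate. The bracket in the definition lives in the quotient of the polynomial algebra in the $\omega_i, {\omega \grave{}}_j$ by the ideal generated by $\omega^2 + 1$, i.e.\ on the super-sphere; since this is a genuine quotient algebra, multiplication by a fixed coordinate is well defined on classes, so that $[\,\omega_i P\,] = \omega_i[\,P\,]$ and likewise for the anticommuting coordinates. This is precisely what licenses moving the coordinate factor outside the bracket before integrating over $r$. Once this is granted the three identities drop out immediately, and as a consistency check one recovers the classical differentiation rule $\cR_{m|0}(\partial_{x_i} f) = \omega_i\,\partial_p\,\cR_{m|0}(f)$ in the purely bosonic case $n = 0$.
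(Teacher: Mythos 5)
Your proof is correct and is exactly the route the paper intends: its proof is the one-liner ``Immediate, using lemma \ref{superfouriercalculus}'', and your argument simply fills in that unwinding, taking the lower sign in lemma \ref{superfouriercalculus}, substituting $y = r\omega$, and converting the factor $ir$ against $e^{ipr}$ into $\partial/\partial p$. Your care about multiplication by coordinates descending to the quotient modulo $\omega^2+1$ is a valid (and correctly resolved) detail the paper leaves implicit.
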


\begin{proof}
Immediate, using lemma \ref{superfouriercalculus}.
\end{proof}

Now we compute the generalized Radon transform of a well-chosen basis $\widetilde{\psi_{j,k,l}}(x)$ of $\cS(\mR^m) \otimes \Lambda_{2n}$. Indeed, if we put
\begin{eqnarray*}
\widetilde{\psi_{j,k,l}}(x) &=& (\px)^{2j} H_k^{(l)} \exp{x^2/2}\\
&=& \widetilde{CH}_{2j,M,k}(x) H_k^{(l)} \exp{x^2/2},
\end{eqnarray*}
we can calculate that
\begin{eqnarray*}
\cR_{m|2n} (\widetilde{\psi_{j,k,l}}) &=& (- \omega)^{2j} (\frac{\partial}{\partial p})^{2j} \cR_{m|2n} (H_k^{(l)} \exp{x^2/2})\\
&=& (-1)^j (2 \pi)^{\frac{M}{2} -1} (\frac{\partial}{\partial p})^{2j}\int_{-\infty}^{\infty} e^{ipr} \cF_{m| 2n}^{-}(H_k^{(l)} \exp{x^2/2})(r \omega) dr\\
&=& (-1)^j (2 \pi)^{\frac{M}{2} -1} (-i)^k (\frac{\partial}{\partial p})^{2j}\int_{-\infty}^{\infty} e^{ipr} H_k^{(l)}(\omega) r^k \exp{-r^2/2} dr\\
&=& (-1)^j (2 \pi)^{\frac{M}{2} -1} (-i)^k  (\frac{\partial}{\partial p})^{2j + k} (-i)^k
\int_{-\infty}^{\infty} e^{ipr}  \exp{-r^2/2} dr H_k^{(l)}(\omega)\\
&=& (-1)^j (2 \pi)^{\frac{M-1}{2}}   (-1)^{2j+k} (\frac{\partial}{\partial p})^{2j + k}
 \exp{-p^2/2}H_k^{(l)}(\omega)\\
&=&(-1)^j (2 \pi)^{\frac{M-1}{2}} \widetilde{H}_{2j+k}(p)
 \exp{-p^2/2}H_k^{(l)}(\omega)
\end{eqnarray*}
with $\widetilde{H}_{2j+k}(p)$ the classical Hermite polynomial of degree $2j+k$.

Hence we may conclude that the Radon transform maps the basis $\widetilde{\psi_{j,k,l}}(x)$ of $\cS(\mR^m) \otimes \Lambda_{2n}$ into $\cS(\mR) \otimes \left(\oplus_{i=0}^{\infty} \cH_i \right)$.

\section{Conclusions}
In this paper we have studied the Fourier transform in superspace from a new point of view, namely that of Clifford analysis. The main difference between our approach and previous work on this subject is the fact that we use a kernel,  the fermionic part of which is invariant under symplectic changes of variables, instead of orthogonal ones.
We have proven several basic properties of this transform. We have also constructed an eigenfunction basis of the transform using the Clifford-Hermite polynomials. This led us to an operator-exponential expression for the Fourier transform, enabling us to introduce a fractional Fourier transform. A closed form of this operator was subsequently obtained.
Finally we have used the Fourier transform to define a Radon transform in superspace using the central-slice theorem. Again we have shown that this transform behaves nicely with respect to the Clifford-Hermite polynomials.


\begin{thebibliography}{10}

\bibitem{MR0208930}
F.~A. Berezin, 
\newblock {\em The method of second quantization}.
\newblock Translated from the Russian by Nobumichi Mugibayashi and Alan
  Jeffrey. Pure and Applied Physics, Vol. 24. Academic Press, New York, 1966.

\bibitem{MR732126}
F.~A. Berezin, 
\newblock {\em Introduction to algebra and analysis with anticommuting
  variables}.
\newblock Moskov. Gos. Univ., Moscow, 1983.
\newblock With a preface by A. A. Kirillov.

\bibitem{MR697564}
F. Brackx, R. Delanghe and F.Sommen,
\newblock {\em Clifford analysis}, vol.~76 of {\em Research Notes in
  Mathematics}.
\newblock Pitman (Advanced Publishing Program), Boston, MA, 1982.

\bibitem{DBE1}
H. De~Bie, D. Eelbode and F. Sommen,
\newblock On spherical harmonics in superspace and how they uniquely determine
  the Berezin integral.
\newblock {\em Submitted\/}.

\bibitem{DBS7}
H. De~Bie and F. Sommen,
\newblock A Cauchy integral formula in superspace.
\newblock {\em Submitted\/}.

\bibitem{DBS4}
H. De~Bie and F. Sommen,
\newblock A Clifford analysis approach to superspace.
\newblock {\em Ann. Physics. 322 (2007) 2978-2993\/}.

\bibitem{DBS1}
H. De~Bie and F. Sommen,
\newblock Correct rules for Clifford calculus on superspace.
\newblock {\em Adv. appl. Clifford alg. 17 (2007), 357-382\/}.

\bibitem{DBS6}
H. De~Bie and F. Sommen,
\newblock Fundamental solutions for the super Laplace and Dirac operators and
  all their natural powers.
\newblock {\em J. Math. Anal. Appl. 338 (2008) 1320-1328\/}.

\bibitem{DBS3}
H. De~Bie and F. Sommen,
\newblock Hermite and Gegenbauer polynomials in superspace using Clifford
  analysis.
\newblock {\em J. Phys. A: Math. Theor. 40 (2007) 10441-10456\/}.

\bibitem{DBS8}
H. De~Bie,
\newblock Schr\"odinger equation with delta potential in superspace.
\newblock {\em Submitted\/}.


\bibitem{DBS5}
H. De~Bie and F. Sommen,
\newblock Spherical harmonics and integration in superspace.
\newblock {\em J. Phys. A: Math. Theor. 40 (2007) 7193-7212\/}.

\bibitem{MR709591}
S.~R. Deans,
\newblock {\em The {R}adon transform and some of its applications}.
\newblock A Wiley-Interscience Publication. John Wiley \& Sons Inc., New York,
  1983.

\bibitem{MR1169463}
R. Delanghe, F. Sommen and V. Sou{\v{c}}ek,
\newblock {\em Clifford algebra and spinor-valued functions}, vol.~53 of {\em
  Mathematics and its Applications}.
\newblock Kluwer Academic Publishers Group, Dordrecht, 1992.

\bibitem{MR778559}
B. DeWitt,
\newblock {\em Supermanifolds}.
\newblock Cambridge Monographs on Mathematical Physics. Cambridge University
  Press, Cambridge, 1984.

\bibitem{MR0435831}
I.~M. Gelfand and G.~E. Shilov, 
\newblock {\em Generalized functions. {V}ol. 1}.
\newblock Academic Press [Harcourt Brace Jovanovich Publishers], New York, 1964
  [1977].
\newblock Properties and operations, Translated from the Russian by Eugene
  Saletan.

\bibitem{MR1723736}
S. Helgason,
\newblock {\em The {R}adon transform}, second~ed., vol.~5 of {\em Progress in
  Mathematics}.
\newblock Birkh\"auser Boston Inc., Boston, MA, 1999.

\bibitem{MR1192484}
A. Inoue,
\newblock Foundation of real analysis on the superspace {${\bf R}\sp {m\vert
  n}$} over the {$\infty$}-dimensional {F}r\'echet-{G}rassmann algebra.
\newblock {\em J. Fac. Sci. Univ. Tokyo Sect. IA Math. 39}, 3 (1992), 419--474.

\bibitem{MR1099318}
A. Inoue and Y. Maeda,
\newblock Foundations of calculus on super {E}uclidean space {${\bf R}\sp
  {m\vert n}$} based on a {F}r\'echet-{G}rassmann algebra.
\newblock {\em Kodai Math. J. 14}, 1 (1991), 72--112.

\bibitem{MR0580292}
B. Kostant,
\newblock Graded manifolds, graded {L}ie theory, and prequantization.
\newblock In {\em Differential geometrical methods in mathematical physics
  (Proc. Sympos., Univ. Bonn, Bonn, 1975)}. Springer, Berlin, 1977,
  pp.~177--306. Lecture Notes in Math., Vol. 570.

\bibitem{MR565567}
D.~A. Le{\u\i}tes, 
\newblock Introduction to the theory of supermanifolds.
\newblock {\em Uspekhi Mat. Nauk 35}, 1(211) (1980), 3--57, 255.

\bibitem{OZA}
H. Ozaktas, Z. Zalevsky and M. Kutay,
\newblock The fractional Fourier transform.
\newblock {\em Wiley, Chichester, 2001\/}.

\bibitem{MR574696}
A. Rogers,
\newblock A global theory of supermanifolds.
\newblock {\em J. Math. Phys. 21}, 6 (1980), 1352--1365.

\bibitem{MR1147140}
A. Rogers,
\newblock Stochastic calculus in superspace. {I}. {S}upersymmetric
  {H}amiltonians.
\newblock {\em J. Phys. A 25}, 2 (1992), 447--468.

\bibitem{MR0304972}
E.~M. Stein and G. Weiss,
\newblock {\em Introduction to {F}ourier analysis on {E}uclidean spaces}.
\newblock Princeton University Press, Princeton, N.J., 1971.
\newblock Princeton Mathematical Series, No. 32.

\end{thebibliography}
\end{document}